\newtheorem{theorem}{Theorem}[section]
\newtheorem{lemma}[theorem]{Lemma}
\newtheorem{proposition}[theorem]{Proposition}
\newtheorem{corollary}[theorem]{Corollary}
\theoremstyle{definition}
\theoremstyle{remark}
\title[Information-Theoretic Isoperimetric Inequalities]{An Information-Theoretic Route to Isoperimetric Inequalities via Heat Flow and Entropy Dissipation}
\author{Amandip Sangha} 
\thanks{Preprint}
\address{The Climate and Environmental Research Institute NILU}
\email{asan@nilu.no}
\begin{document}

\begin{abstract}
We develop an information-theoretic approach to isoperimetric inequalities based on entropy dissipation under heat flow.
By viewing diffusion as a noisy information channel, we measure how mutual information about set membership decays over time.
This decay rate is shown to be determined by the boundary measure of the set, leading to a new proof of the Euclidean isoperimetric inequality with its sharp constant.
The method extends to Riemannian manifolds satisfying curvature–dimension conditions, yielding Lévy–Gromov and Gaussian isoperimetric results within a single analytic principle.
Quantitative and stability bounds follow from refined entropy inequalities linking information loss to geometric rigidity.
The approach connects geometric analysis and information theory, revealing how entropy dissipation encodes the geometry of diffusion and boundary.
\end{abstract}

\maketitle
\setcounter{tocdepth}{1} 
\tableofcontents

\section{Introduction}

Isoperimetric inequalities lie at the foundation of geometric analysis: among
all sets of fixed volume in $\mathbb{R}^n$, the Euclidean ball uniquely minimizes
surface area \cite{Maggi2012}. On Riemannian manifolds, curvature modifies this principle and
leads to sharp comparison results such as the Lévy--Gromov inequality
\cite{Gromov1980,GromovIsoPer,MorganJohnson2000}. A large body of work—from classical symmetrization
to modern diffusion and optimal-transport techniques—has established these results through
geometric measure theory \cite{federer1969geometric}, variational calculus \cite{Giusti1984}, and $\Gamma$–calculus \cite{BakryGentilLedoux2014}.

This paper develops a new analytic route to isoperimetric inequalities based on
\emph{entropy dissipation under heat flow}.  The central observation is that heat
diffusion acts as a noisy communication channel that gradually destroys
information about which side of a boundary a point lies on.  The rate at which
this mutual information decays is controlled precisely by the measure of the
boundary, linking information loss directly to perimeter.  This perspective
unites concepts from information theory—entropy, mutual information, and
data–processing inequalities—with classical tools of geometric analysis such as
the heat semigroup and small–time heat–kernel asymptotics.

From this viewpoint we derive, in the Euclidean case, a quantitative identity
between the short–time increase of conditional entropy and the perimeter
measure, yielding a new proof of the sharp isoperimetric inequality and rigidity
for balls.  The method extends naturally to manifolds satisfying
curvature–dimension conditions $\mathrm{CD}(K,n)$, where entropy dissipation
estimates along the heat flow give rise to curvature–dependent comparison
theorems of Lévy–Gromov type.  Refined versions lead to quantitative and
stability results: near–optimal information decay implies geometric closeness to
the extremal ball or spherical cap.  The approach offers a unified analytic
principle encompassing Euclidean, Gaussian, and positively or negatively curved
spaces, and connects the monotonicity of mutual information under diffusion with
geometric rigidity and concentration phenomena.

\medskip
\noindent
\textbf{Main curvature–corrected entropy law.}
A central outcome of the theory is the \emph{curvature–corrected
entropy expansion}, established in Theorem~\ref{thm:global-expansion}:
\begin{equation}\label{eq:curvature_expansion_intro}
H_E(t)
\;=\;
C\,\sqrt{t}\,\mathrm{Per}(E)
\;-\;
C\,K\,t^{3/2}\,\mathrm{Per}(E)
\;+\;o(t^{3/2}),
\qquad t \downarrow 0,
\end{equation}
valid on every Riemannian manifold with $\mathrm{Ric}\ge K g$.
This formula refines the Euclidean entropy–perimeter law by revealing how Ricci
curvature enters as the \emph{first correction term} to the information–dissipation rate.
It provides a quantitative bridge between entropy production, mean curvature, and
ambient Ricci curvature, placing classical isoperimetric inequalities and their stability
within a single analytic expansion.

\medskip
\noindent
\textbf{Universal constant.}
The coefficient $C$ in \eqref{eq:curvature_expansion_intro} is
\emph{universal}—it arises from a simple one-dimensional Gaussian boundary–layer integral that
is independent of the ambient dimension, geometry, and topology.
Its dimension–free nature underscores that the entropy–perimeter correspondence is intrinsic
to diffusion itself rather than to any specific manifold.  In this sense,
Theorem~\ref{thm:global-expansion} provides a single analytic expansion — the curvature-corrected entropy–perimeter law — whose specializations to flat, positively curved, negatively curved, and weighted (Gaussian) settings correspond to the Euclidean, spherical, hyperbolic, and Gaussian isoperimetric inequalities, respectively.

\medskip
\noindent
\textbf{Structural correspondence.}
This work reveals a structural bridge between three central analytic themes:
the short–time asymptotics of the heat kernel, the geometric measure theory of
sets of finite perimeter, and the information–theoretic notion of entropy.
By viewing surface area as the leading coefficient in the entropy expansion of
heat diffusion, we obtain a precise analytic correspondence between diffusion
dynamics and boundary geometry.  Within this framework, the classical
isoperimetric problem appears as the variational principle governing minimal
initial entropy production, while curvature enters naturally through a
higher–order term in the expansion.  The resulting formulation places
isoperimetric geometry, curvature, and entropy within a single analytic
structure and shows how isoperimetric inequalities emerge from the thermodynamic
behavior of diffusion, with entropy and curvature acting as analytic mediators
between geometry and heat flow.

\medskip
\noindent
\textbf{A new perspective.}
Traditional proofs of the isoperimetric inequality have followed two dominant paradigms. 
The \emph{optimal-transport approach} derives isoperimetry from the displacement convexity of entropy along Wasserstein geodesics 
\cite{CEMS,OttoVillani,Milman,CavallettiMondino2017}, 
while the \emph{concentration and functional-inequality approach} 
traces the result to \\ curvature–dimension conditions, Log-Sobolev inequalities, or measure concentration phenomena 
\cite{BakryEmery1985,BobkovLedoux,Ledoux}. 
Both routes are powerful but geometrically or probabilistically indirect, 
relying on the global structure of transport maps or on integrated concentration bounds.

The present work introduces an \emph{information-theoretic route} that departs sharply from these frameworks. 
We interpret the heat flow itself as an information channel, viewing the diffusion of a set as the gradual loss of information about its set-membership variable. 
The decay of mutual information between the initial label and its diffused image quantifies the entropy dissipation of this binary process, 
and its short-time expansion yields the perimeter as the leading-order information-production term. 
This formulation identifies the isoperimetric inequality as an \emph{information dissipation principle}. Curvature bounds enter naturally through the Bochner identity, and equality corresponds to the rigidity of the Fisher information flow. 
The approach thus provides a direct analytic route—avoiding transport maps and concentration estimates—yet recovers the sharp Euclidean and L\'evy–Gromov constants with full rigidity.

\medskip
\noindent
\textbf{Outline of the paper.}
The paper is organized as follows. Section 2 introduces the necessary notation and background from geometric measure theory, heat semigroups, and information theory. Section 3 formulates the heat flow as an information channel, establishing the monotonicity properties of mutual information. Section 4 discusses entropy dissipation and its link to boundary geometry, providing the proof of the Euclidean isoperimetric inequality. Section 5 focuses on the derivation of quantitative stability results. Section 6 extends the method to manifolds under the curvature–dimension condition $CD(K,n)$. Section 7 presents the curvature-corrected entropy expansion and its geometric interpretation. Section 8 provides examples that illustrate the results in various geometric settings. The final section concludes with a summary of results and their implications.

\section{Preliminaries}
We recall the basic notions and notation used throughout the paper. 

\subsection{Geometric measure and perimeter}
Let $(M,g)$ be an $n$–dimensional compact boundaryless Riemannian manifold with metric $g$ and volume measure $d\mu = dvol_g$. 

For a measurable set $E\subset M$, the \emph{perimeter}
\cite{DeGiorgi1954,AmbrosioFuscoPallara2000}
is defined as the total variation of its indicator function~$\mathbf{1}_E$
with respect to~$d\mu$, which can be expressed equivalently as
\[
Per(E)
:= \sup\Bigl\{
   \int_E \mathrm{div}_g X\,d\mu :
   X\in C_c^1(TM),\ |X|\le 1
   \Bigr\}.
\]
A set $E$ has \emph{finite perimeter} if $Per(E)<\infty$.
When $E$ has smooth boundary,
\[
Per(E)=H^{n-1}_g(\partial E),
\qquad
\text{and in general } Per(E)=H^{n-1}_g(\partial^*E),
\]
where $\partial^*E$ is the reduced boundary in the sense of
De~Giorgi~\cite{DeGiorgi1954}. 

Let $H^{n-1}_g$ denote the $(n\!-\!1)$–dimensional Hausdorff
measure induced by the Riemannian metric~$g$.  For a smooth hypersurface
$\Sigma \subset M$, which is by definition a smooth $(n-1)$–dimensional embedded submanifold of $M$ locally given as the zero level set of a smooth
function with nonvanishing gradient, then the Hausdorff measure $H^{n-1}_g$ coincides with the $(n\!-\!1)$–dimensional surface area
obtained by integrating the \emph{induced surface element}
$d\sigma_g$ defined as the pullback of the interior product of the Riemannian
volume form with the unit normal vector field:
\[
  d\sigma_g := \iota^{*}\bigl(i_{\nu}\, d\mu\bigr),
\]
where $\iota:\Sigma \hookrightarrow M$ is the inclusion map, $\nu$ is a
locally defined unit normal field along~$\Sigma$, and $i_\nu$ is the interior product (contraction) of a differential form with the unit normal vector field~$\nu$. Now the corresponding
$(n-1)$--dimensional Riemannian Hausdorff measure $H^{n-1}_g$ coincides with
the induced surface element, so that
\[
  H^{n-1}_g(\Sigma)
    = \int_{\Sigma} d\sigma_g.
\]
If a measurable set $E$ has smooth boundary, then \cite{AmbrosioFuscoPallara2000}
\[ Per(E)=H^{n-1}_g(\partial E) = \int_{\partial E} d\sigma_g. \]  
The \emph{isoperimetric profile} \cite{Bayle2004, MorganGMT, Milman2010} of $(M,g)$ is $I_M: [0,1] \longrightarrow [0,\infty)$,
\[
I_M(s)=\inf\{Per(E):\mu(E)=s\},\qquad s\in[0,1].
\]

We use the notation \(A \lesssim B\) to indicate that \(A \leq C\,B\) for some
constant \(C > 0\) independent of the relevant parameters. Recall that the volume of the unit $n$-ball in $\mathbb{R}^n$ is 
$\omega_n = \pi^{n/2}/\Gamma(\tfrac{n}{2}+1)$, so for the $n-1$-sphere 
$|S^{\,n-1}| = n\,\omega_n = 2\pi^{n/2}/\Gamma(\tfrac{n}{2})$.

\subsection{Heat semigroup and diffusion}\label{sec:heatsemigroupdiffusion}
The \emph{heat semigroup} $(P_t)_{t\ge0}$ associated with the Laplace--Beltrami
operator $\Delta_g$ is defined \cite{GrigorYan2009} for functions $f\in L^{\infty}(M,\mathbb{R})$ by
\[
P_t f(x) = \int_M K_t(x,y)\,f(y)\,d\mu(y),
\]
where $K_t(x,y)$ is the heat kernel satisfying
\[
\partial_t K_t(x,y) = \Delta_g K_t(x,y), \qquad
\lim_{t\downarrow0} K_t(x,y) = \delta_x(y).
\]
Equivalently, $(P_t f)$ is the unique smooth solution to the heat equation
$\partial_t u = \Delta_g u$ with initial condition $u(\cdot,0)=f$. The heat semigroup is defined as
\[
P_t = e^{-t \Delta},
\]
and the kernel $K_t(x,y)$ has the following asymptotic form as \(t \to 0\):
\[
K_t(x,y) = \frac{1}{(4\pi t)^{n/2}} e^{-\frac{d^2(x,y)}{4t}},
\]
where \(d(x,y)\) is the distance between \(x\) and \(y\) on the manifold \(M\). For a set $E \subset M$ with smooth boundary $\partial E$, the entropy functional $H_E(t)$ is defined as:
\[
H_E(t) = \int_{\partial E} P_t \mathbf{1}_E(x) \log\left( P_t \mathbf{1}_E(x) \right) \, d\sigma(x).
\]

On a Riemannian manifold $(M,g)$ with Laplace--Beltrami operator
$L=\Delta_g$ and heat semigroup $(P_t)_{t\ge0}$, we say that
$(M,g)$ satisfies the \emph{curvature–dimension condition}
$\mathrm{CD}(K,n)$, for parameters $K\in\mathbb{R}$ and
$n\in[1,\infty]$, if its Ricci curvature tensor satisfies
\[
\mathrm{Ric}_g(v,v) \;\ge\; K\,|v|_g^2
\quad \text{for all } v\in TM,
\qquad \text{and}\quad \dim M \le n.
\]
Equivalently, the diffusion generator $L=\Delta_g$ obeys the
Bakry--Émery inequality
\[
\Gamma_2(f) \;\ge\; K\,\Gamma(f)
  + \tfrac{1}{n}\,(Lf)^2,
\qquad \forall f\in C^\infty(M),
\]
where $\Gamma(f)=|\nabla f|^2$ and
$\Gamma_2(f)
= \tfrac12\,L|\nabla f|^2 - \langle\nabla f,\nabla Lf\rangle$.
This is the analytic formulation of the curvature–dimension condition
of Bakry and Émery \cite{BakryEmery1985,BakryGentilLedoux2014},
and is equivalent to the geometric lower Ricci bound used by
Lott, Sturm and Villani
\cite{Sturm2006a,Sturm2006b,LottVillani2009}.
Under the $CD(K,n)$ condition, the heat semigroup $(P_t)$ enjoys
gradient and entropy contraction properties crucial for functional
inequalities.

\subsection{Entropy and mutual information}
For a discrete random variable $L$ taking values in $\{0,1\}$ with $\mathbb P(L=1)=p$, the Shannon entropy \cite{Shannon1948} is
$H(L)=-p\log p-(1-p)\log(1-p)$. In information theory, entropy quantifies the uncertainty or information content of a random variable. In physics and stochastic analysis, it measures disorder or the spread of probability—that is, how “mixed” or “uncertain” a system’s state is. Mathematically, both notions correspond to the same functional, interpreted differently: one informational, the other thermodynamic.

If $Y$ is another random variable on $M$, the conditional entropy and mutual information are
\[
H(L|Y)=\mathbb E_{Y}[H(L|Y=y)] = \mathbb E[h(\mathbb P(L=1|Y))], \qquad
I(L;Y)=H(L)-H(L|Y),
\]
where $h(u)=-u\log u-(1-u)\log(1-u)$ is the well known \emph{binary entropy function} \cite{Shannon1948} that represents the Shannon entropy of a Bernoulli random variable with success probability $u$, used to define $H(L)$ above.

Two basic inequalities will be used repeatedly:\\
\noindent
\emph{Data processing inequality.} \cite{CoverThomas2006} If $L\to X\to Y$ is a Markov chain, then $I(L;Y)\le I(L;X)$.

\noindent
\emph{Fano's inequality.} \cite{Fano1961,CoverThomas2006} Let $\mathcal{L}$ denote the finite set of possible values (alphabet) of $L$, and let $|\mathcal{L}|$ be its cardinality. The minimal classification error $P_e$ in estimating $L$ from $Y$ satisfies
  $H(L|Y)\le h(P_e)+P_e\log(|\mathcal L|-1)$.

For a measurable set $E\subset M$, we will consider 
\[
L = \mathbf{1}_E(X)
=
\begin{cases}
1, & \text{if } X\in E,\\[4pt]
0, & \text{otherwise.}
\end{cases}
\]
for $X\sim\mu$, i.e. the random variable $X$ is a point randomly sampled from the manifold $M$ according to the probability measure (Riemannian volume element) $\mu$. Thus, $L$ is a binary random variable (label) indicating whether the random point $X$ lies inside the set $E\subset M$. Hence, $L$ takes values in $\mathcal{L}=\{0,1\}$, with
$|\mathcal{L}|=2$, and Fano’s inequality reduces to $H(L|Y)\le h(P_e)$.

\paragraph{Communication, entropy, and thermodynamics.}
In information theory, a \emph{noisy channel} refers \cite{CoverThomas2006} to a conditional distribution
$P_{Y|X}$ that maps each input symbol $X$ to a random output $Y$.
The pair $(X,Y)$ thereby models transmission through a stochastic medium:
the input $X$ is “sent”, and the receiver observes a possibly corrupted
version $Y$ drawn from $P_{Y|X}$. The amount of information preserved
between input and output is quantified by the \emph{mutual information}
$I(X;Y)$ introduced above.

From a thermodynamics viewpoint, entropy plays an analogous role: it measures the
degree of microscopic uncertainty compatible with a macroscopic description.
In statistical mechanics, many microstates correspond to the same observable
macrostate, and entropy quantifies this hidden multiplicity \cite{ReifStatMech}.  As a system evolves irreversibly—such as under heat diffusion or mixing—microscopic
information is gradually lost, and entropy increases.  This correspondence
highlights the deep connection between information and disorder: both describe
the loss of distinguishability among states as noise or thermal agitation
spreads through a system.

Throughout this paper we interpret the heat semigroup $(P_t)_{t\ge0}$ as
inducing a continuous-time noisy channel as follows. Each $P_t$ admits an integral representation
\(
P_t f(x)=\int_M K_t(x,y)\,f(y)\,d\mu(y),
\)
where $K_t(x,\cdot)$ is the heat kernel, viewed as a probability measure on~$M$.
Thus, for fixed $t>0$, the mapping
\(
x \mapsto K_t(x,\cdot)
\)
is a Markov kernel, which we interpret as the (discrete-time) noisy channel
\[
P_{Y_t|X=x}(B) := \int_B K_t(x,y)\,d\mu(y),
\]
where $B\subset M$ is a measurable set.
The family $\{K_t\}_{t>0}$ is a \emph{continuous-time family of channels}
satisfying the composition (semigroup) law
\[
K_{t+s}(x,\cdot)=\int_M K_t(z,\cdot)\,K_s(x,dz),
\qquad
\text{equivalently }~ P_{Y_{t+s}|X} = P_{Y_t|Y_s}\!\circ P_{Y_s|X}.
\]
We will therefore speak of “the heat channel at time $t$”, meaning the
Markov kernel $K_t(\cdot,\cdot)$ induced by the heat semigroup.

We define the universal constant $C$ that appears throughout the entropy expansions,
\begin{equation}\label{eq:constantC}
\begin{aligned}
C &:= \sqrt{2}\int_{\mathbb{R}} h(\Phi(u))\,du \\
  &= \sqrt{2}\int_{\mathbb{R}}\!\Big(-\Phi(u)\log \Phi(u) 
  - (1-\Phi(u))\log(1-\Phi(u))\Big)\,du,
\end{aligned}
\end{equation}
where \[
\Phi(a)
:= \frac{1}{\sqrt{2\pi}}
   \int_{-\infty}^{a} e^{-t^2/2}\,dt
\]
is the Gaussian CDF. The constant \(C\) governs the leading-order term in the Euclidean entropy–perimeter relation.

\section{The Heat Flow as an Information Channel}
We now interpret the heat semigroup $(P_t)_{t\ge0}$ on $(M,g)$ as an
information channel that gradually destroys knowledge about the
membership of a point in a given set $E\subset M$.

\subsection{The noisy--label model}
Let $X$ be a random point distributed according to the normalized volume
measure $\mu$ on $M$, and let $L=\mathbf 1_E(X)\in\{0,1\}$ denote the indicator or 
\emph{label} indicating whether $X$ lies inside or outside $E$.  
For $t>0$ we define the \emph{diffused observation}
$Y_t\sim K_t(X,\cdot)$, where $K_t$ is the heat kernel associated with
$\Delta_g$.
Thus $X\mapsto Y_t$ is a Markov kernel describing the random location of
a Brownian particle after time~$t$, and
\[
  L \;\longrightarrow\; X \;\longrightarrow\; Y_t
\]
forms a Markov chain (Section $\ref{subsec:mutual-info}$).
The map $X\mapsto Y_t$ therefore plays the role of a \emph{noisy
channel} through which the binary label~$L$ is transmitted with noise
generated by heat diffusion. 

\subsection{Mutual information under diffusion}\label{subsec:mutual-info}
Define the conditional probability
\[
p_t(x)=\mathbb P(L=1\mid Y_t=x)=P_t\mathbf 1_E(x),
\]
which satisfies the heat equation $\partial_t p_t=\Delta_g p_t$ with
initial data $\mathbf 1_E$ \cite{GrigorYan2009}.  
The conditional entropy of $L$ given the observation $Y_t$ is
\[
H(L|Y_t)=\int_M h(p_t(x))\,d\mu(x),
\qquad
h(u)=-u\log u-(1-u)\log(1-u),
\]
and the mutual information is
$I(L;Y_t)=H(L)-H(L|Y_t)$.  
Now, $L\to X\to Y_t$ is a Markov chain \cite{CoverThomas2006}. Indeed, as $L = \mathbf{1}_E(X)$ is $X$-measurable and $Y_t$ is sampled from the heat kernel $K_t(X,\cdot)$, we have for every measurable set $B \subset M$,
\[
\mathbb{P}(Y_t \in B \mid X, L)
= K_t(X, B)
= \mathbb{P}(Y_t \in B \mid X),
\]
which shows that the conditional distribution of $Y_t$ given $(X,L)$ does not depend on $L$; hence $L \to X \to Y_t$ is a Markov chain. The \emph{data--processing inequality} implies that $I(L;Y_t)$ is
nonincreasing in~$t$ \cite{Shannon1948, CoverThomas2006, Stam1959, Costa1985}, reflecting the irreversible loss of information
under diffusion. Indeed, by the Markov property of the heat semigroup, one has the Markov chain
\[
L \;\longrightarrow\; Y_s \;\longrightarrow\; Y_t,
\qquad 0 < s < t,
\]
where $Y_t$ is obtained from $Y_s$ by an additional diffusion step.
The chain rule for mutual information gives
\[
I(L;Y_s,Y_t)
= I(L;Y_t) + I(L;Y_s\mid Y_t)
= I(L;Y_s) + I(L;Y_t\mid Y_s).
\]
Since $L\to Y_s\to Y_t$ implies $I(L;Y_t\mid Y_s)=0$, it follows that
\[
I(L;Y_t)
= I(L;Y_s) - I(L;Y_s\mid Y_t)
\le I(L;Y_s),
\]
and the lost information is precisely $I(L;Y_s\mid Y_t)\ge0$.
This expresses the fact that once information about $L$ is lost through
diffusion, it cannot be recovered by further evolution—
hence, the process is irreversible.

The quantities $H(L|Y_t)$ and $I(L;Y_t)$ measure the uncertainty and information loss about the membership of $X$ in $E$ after diffusion time $t$. We next quantify the rate of this information loss.

For a measurable set \(E \subset M\), we define the associated entropy functional
\[
H_E(t) := \int_M h(p_t(x))\,d\mu(x)
= \int_M h\!\bigl(P_t\mathbf{1}_E(x)\bigr)\,d\mu(x).
\]
This quantity coincides with the conditional entropy \(H(L \mid Y_t)\)
introduced above, since \(h(p_t(x))\) represents the pointwise entropy of the
posterior probability \(P(L=1 \mid Y_t=x)\).

\subsection{Information dissipation identity}
The decay of mutual information along the heat flow can be expressed in
differential form, relating the rate of information loss to the geometric
gradient structure of the diffusion.  The following proposition makes this
relation precise.

\begin{proposition}\label{prop:inf-diss-id}
We have the information dissipation identity
\begin{equation}\label{eq:dissipationIdentity}
I_t := \frac{d}{dt}H(L|Y_t)
  = -\int_M \frac{|\nabla p_t|^2}{p_t(1-p_t)}\,d\mu.
\end{equation}
\end{proposition}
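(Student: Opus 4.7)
The strategy is to differentiate the conditional entropy functional directly, substitute the heat equation, and integrate by parts on the closed manifold. Starting from
\[
H(L|Y_t) \;=\; \int_M h(p_t(x))\,d\mu(x),
\]
I would first justify interchanging $\tfrac{d}{dt}$ with the spatial integral using the parabolic smoothing of the heat semigroup, obtaining
\[
\tfrac{d}{dt}H(L|Y_t) \;=\; \int_M h'(p_t)\,\partial_t p_t\,d\mu \;=\; \int_M h'(p_t)\,\Delta_g p_t\,d\mu,
\]
where the second equality uses $\partial_t p_t = \Delta_g p_t$ recorded in Section~\ref{sec:heatsemigroupdiffusion}.

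Next, I would apply Green's identity on the compact, boundaryless manifold $M$: there are no boundary contributions, and
\[
\int_M h'(p_t)\,\Delta_g p_t\,d\mu \;=\; -\int_M \nabla\bigl(h'(p_t)\bigr)\cdot\nabla p_t\,d\mu \;=\; -\int_M h''(p_t)\,|\nabla p_t|^2\,d\mu.
\]
A direct pointwise calculation from $h(u) = -u\log u - (1-u)\log(1-u)$ gives $h'(u) = \log\tfrac{1-u}{u}$ and hence $h''(u) = -\tfrac{1}{u(1-u)}$. Substituting produces the integrand $|\nabla p_t|^2 / [p_t(1-p_t)]$ and the identity claimed in~\eqref{eq:dissipationIdentity}. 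Once these ingredients are in place, the computation is essentially one line; the entire content of the proposition is therefore the combination of parabolic regularity, Green's identity, and the explicit form of $h''$.

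The principal technical obstacle lies in the singularities of $h''$ at $u \in \{0,1\}$, which matter because the initial datum $p_0 = \mathbf 1_E$ attains precisely those values. To handle this I would invoke parabolic regularity: for every $t>0$ the function $p_t = P_t\mathbf 1_E$ is smooth, and assuming $0 < \mu(E) < 1$ on the connected manifold $M$, the strong maximum principle together with two-sided Gaussian heat-kernel bounds forces a uniform separation $0 < \delta(t) \le p_t(x) \le 1-\delta(t) < 1$ for all $x \in M$. This uniform control makes $h''(p_t)$ bounded and $h''(p_t)\,|\nabla p_t|^2$ integrable on every slice $t \in [\varepsilon,T]$ with $\varepsilon>0$, which legitimizes both the differentiation under the integral sign and the classical integration by parts, completing the identity for all $t > 0$.
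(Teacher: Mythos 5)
Your proof follows exactly the paper's route: differentiate under the integral, substitute the heat equation $\partial_t p_t = \Delta_g p_t$, integrate by parts on the closed manifold, and evaluate $h''(u) = -1/[u(1-u)]$; you add welcome detail on the two points the paper leaves implicit, namely the justification for differentiating under the integral and the strict bounds $0<p_t<1$ from the maximum principle. One caveat, inherited from the paper's own derivation: since $h''(u) = -1/[u(1-u)]$, the chain $\int_M h'(p_t)\,\Delta_g p_t\,d\mu = -\int_M h''(p_t)\,|\nabla p_t|^2\,d\mu$ actually yields $+\int_M |\nabla p_t|^2/[p_t(1-p_t)]\,d\mu \ge 0$, which is the sign consistent with $H(L\mid Y_t)$ being nondecreasing under data processing; the overall minus in the displayed identity is a sign slip that both you and the paper carry through.
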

\begin{proof}
Write $H(t):=H(L\mid Y_t)=\int_M h(p_t)\,d\mu$, where $p_t:=P_t\mathbf{1}_E$. For $t>0$, $p_t\in C^\infty(M)$ solves
$\partial_t p_t=\Delta_g p_t$, and $0<p_t<1$. Then
\begin{align*}
\frac{d}{dt}H(t)
&= \int_M h'(p_t)\,\partial_t p_t\,d\mu \\
&= \int_M h'(p_t)\,\Delta_g p_t\,d\mu \\
&= -\int_M \langle \nabla h'(p_t),\nabla p_t\rangle\,d\mu \\
&= -\int_M h''(p_t)\,|\nabla p_t|^2\,d\mu \\
&= -\int_M \frac{|\nabla p_t|^2}{p_t(1-p_t)}\,d\mu
\end{align*}
\end{proof}
The quantity $I_t$ can be interpreted as the Fisher information of the diffused label $p_t$ with respect to the Riemannian measure. The nonnegative quantity $I_t$ measures the instantaneous rate
of information dissipation.
The identity above expresses the monotonic dissipation of information under
heat diffusion, mirroring the de~Bruijn relation \cite{ParkSerpedinQaraqe2012} for continuous densities.
It will serve as the analytic foundation for the entropy–perimeter laws
developed in the following section.

\medskip
\noindent
The remainder of the paper develops this principle quantitatively,
first in Euclidean space and then under general curvature--dimension
conditions, where curvature modifies the rate of entropy dissipation and
thus the shape of the optimal isoperimetric profile as expected.

\section{Entropy Dissipation and Boundary Geometry}
We now establish the connection between the rate of entropy growth
$\,\frac{d}{dt}H(L|Y_t)\,$ and the perimeter of a set $E\subset M$.
The main result identifies the first--order term of the entropy
expansion with the $(n{-}1)$–dimensional Hausdorff measure of
$\partial E$.

\subsection{Small--time expansion of conditional entropy}
Let $p_t=P_t\mathbf 1_E$.  For smooth $\partial E$,
the asymptotic behaviour of the heat semigroup near the boundary is
governed by Varadhan's formula \cite{Varadhan1967} and the Minakshisundaram–Pleijel expansion \cite{MinakshisundaramPleijel1949,Rosenberg1997}:
\[
K_t(x,y)\simeq (4\pi t)^{-n/2}\,
  e^{-d_g(x,y)^2/4t}\,(1+O(t)).
\]

The following result is the main analytic step of our approach.
It quantifies how the heat semigroup smooths indicator functions at short times,
and expresses the resulting $L^1$–jump in terms of the boundary measure.
This asymptotic formula is the key technical step linking entropy growth to perimeter.
\begin{theorem}[Heat-content and $L^1$--jump asymptotics]
\label{thm:heatL1}
Let $(M,g)$ be a smooth compact Riemannian $n$–manifold, and let $E\subset M$ have smooth boundary~$\partial E$.
Let $(P_t)_{t\ge0}$ be the heat semigroup associated to the Laplace--Beltrami operator.
Then, as $t\downarrow0$,
\begin{equation}\label{eq:heatL1}
\|P_t \mathbf{1}_E - \mathbf{1}_E\|_{L^1(M)}
= \frac{2}{\sqrt{\pi}}\;\sqrt{t}\;H^{n-1}_g(\partial E) \;+\; o(\sqrt{t}).
\end{equation}
\end{theorem}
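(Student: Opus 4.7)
The plan is to reduce the $L^1$-difference to a symmetric heat-content integral, localize to a thin tubular neighborhood of $\partial E$, and then, in Fermi coordinates along the boundary, compare the heat semigroup with its one-dimensional Gaussian model to extract the leading $\sqrt{t}$ coefficient with its explicit constant. For the reduction, since $0\le P_t\mathbf{1}_E\le 1$ and $P_t$ is self-adjoint on $L^2(M,\mu)$, one has $\int_E P_t\mathbf{1}_{E^c}\,d\mu=\int_{E^c}P_t\mathbf{1}_E\,d\mu$, and hence
\[
\|P_t\mathbf{1}_E-\mathbf{1}_E\|_{L^1(M)}
=\int_E(1-P_t\mathbf{1}_E)\,d\mu+\int_{E^c}P_t\mathbf{1}_E\,d\mu
=2\int_{E^c}P_t\mathbf{1}_E\,d\mu,
\]
so it suffices to show that the cross heat content $Q(t):=\int_{E^c}\!\int_E K_t(x,y)\,d\mu(x)\,d\mu(y)$ satisfies $Q(t)=\sqrt{t/\pi}\,H^{n-1}_g(\partial E)+o(\sqrt{t})$.

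Next I would localize. Using the Gaussian upper bound on $K_t$ valid on any compact manifold, the contribution to $Q(t)$ from pairs with $d_g(x,\partial E)>\delta$ or $d_g(y,\partial E)>\delta$ is bounded by $C(M,E)\,e^{-c\delta^2/t}$, hence $o(\sqrt{t})$ for any fixed $\delta>0$. It therefore suffices to analyse $Q(t)$ restricted to the tubular neighborhood $T_\delta:=\{d_g(\cdot,\partial E)<\delta\}$. Choosing $\delta$ smaller than the normal injectivity radius of $\partial E$, the Fermi map $(y,s)\mapsto\exp_y(s\,\nu(y))$ is a diffeomorphism onto $T_\delta$, with $s$ the signed distance to $\partial E$ (positive outside $E$) and volume element $d\mu=J(y,s)\,d\sigma_g(y)\,ds$, where $J(y,0)=1$ and $J(y,s)=1+O(s)$ uniformly on $\partial E$, the first-order term being proportional to the mean curvature.

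In these coordinates I would then compare to the flat one-dimensional model. Varadhan's formula together with the Minakshisundaram--Pleijel expansion and the quadratic expansion of the squared distance function in Fermi coordinates give
\[
K_t(x,y)=(4\pi t)^{-n/2}e^{-d_g(x,y)^2/4t}(1+O(t)),
\qquad
d_g(x,y)^2=(s_x-s_y)^2+|y_x-y_y|_{\partial E}^{\,2}+R,
\]
with $R$ of cubic order in the coordinates. Integrating the tangential Gaussian yields unity at leading order, and $Q(t)$ reduces, up to $o(\sqrt{t})$, to the one-dimensional cross heat content
\[
\int_{\partial E}d\sigma_g(y)\int_{0}^{\infty}\!\!\int_{-\infty}^{0}\frac{1}{\sqrt{4\pi t}}\,e^{-(s-r)^2/4t}\,dr\,ds
=\sqrt{t/\pi}\,H^{n-1}_g(\partial E),
\]
where the inner double integral is evaluated by the substitution $v=s-r$ to give exactly $\sqrt{t/\pi}$. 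Combined with the factor $2$ from the reduction step, this yields the asserted constant $2/\sqrt{\pi}$.

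The main obstacle is the rigorous treatment of the error terms: the Jacobian correction $J(y,s)-1=O(s)$, the cross and tangential pieces of the squared-distance expansion, and the cubic remainder $R$ each produce contributions of apparent size $\sqrt{t}$ once integrated against the normal Gaussian, and naively they would compete with the leading term. The essential cancellation is that all such corrections are \emph{odd in the normal variable} and therefore integrate to zero against the symmetric Gaussian at leading order, leaving only $o(\sqrt{t})$ residues. Making this symmetry argument quantitative and uniform in $y\in\partial E$ is the technical heart of the proof; compactness of $M$ together with smoothness of $\partial E$ supply the uniform bounds on curvature, injectivity radius, and the heat-kernel remainder needed to close the estimate.
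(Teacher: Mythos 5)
Your proposal is correct, and it takes a genuinely different route from the paper's proof. The paper estimates $\|P_t\mathbf{1}_E-\mathbf{1}_E\|_{L^1}$ directly: it splits the manifold into a boundary layer of thickness $R_t=t^\alpha$ and its complement, establishes the pointwise profile $P_t\mathbf{1}_E(\Psi(y,s))=\Phi(s/\sqrt{2t})+O(\sqrt{t})$ uniformly in the layer via the Minakshisundaram--Pleijel parametrix, and then integrates $f_t(s)=|\Phi(s/\sqrt{2t})-\mathbf{1}_{\{s>0\}}|$, using evenness of $f_t$ against the odd Jacobian correction $\kappa(y)s$ to kill the first error term and the computation $\int_{\mathbb{R}}f_t=\tfrac{2}{\sqrt{\pi}}\sqrt{t}$ for the leading constant. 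You instead first apply the self-adjointness of $P_t$ and mass conservation $P_t\mathbf{1}=\mathbf{1}$ (valid on a compact boundaryless manifold) to reduce the $L^1$ jump exactly to twice the cross heat content $Q(t)=\int_{E^c}\int_E K_t\,d\mu\,d\mu$, which is a classical symmetric object in spectral geometry; you then evaluate $Q(t)$ in Fermi coordinates via the 1D double Gaussian integral $\int_0^\infty\!\int_{-\infty}^0\tfrac{1}{\sqrt{4\pi t}}e^{-(s-r)^2/4t}\,dr\,ds=\sqrt{t/\pi}$. Your route avoids constructing the boundary-layer profile and makes the leading constant transparent; the paper's route is more economical in context because the profile $\Phi(s/\sqrt{2t})$ is reused immediately for the entropy asymptotics in Corollary~\ref{cor:entropy-expansion}.

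Two small points to tighten. First, your claim that the squared distance in Fermi coordinates equals $(s_x-s_y)^2+|z_x-z_y|^2$ up to a ``cubic'' remainder is imprecise; the corrections involve the second fundamental form and ambient curvature and are not purely cubic, but they are $O(|\cdot|^3)$ and hence contribute $o(\sqrt{t})$ after the rescaling, which is what matters. Second, your parity argument deserves one more sentence: in the heat-content formulation the Jacobian correction appears (after tangential integration) as $\kappa\,(s+r)$; under the change of variables $v=s-r>0$, $w=s+r$, the domain $\{s>0,\,r<0\}$ becomes $\{|w|<v\}$, which is symmetric in $w$, so the linear-in-$w$ term integrates to zero. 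That is the precise incarnation of your ``odd in the normal variable'' cancellation, analogous to the paper's evenness-of-$f_t$ step, and once stated this way the argument closes cleanly.
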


\begin{proof}
We write $K_t(x,y)$ for the heat kernel, so $P_t f(x)=\int_M K_t(x,y)f(y)\,d\mu(y)$.

\medskip
\noindent
As the boundary $\partial E$ is smooth and $M$ is compact, there exists $\rho>0$ such that the normal exponential map
\[
\Psi:\ \partial E \times (-\rho,\rho)\ \longrightarrow\ U_\rho := \{x\in M:\ \mathrm{dist}(x,\partial E)<\rho\},
\Psi(y,s)=\exp_y(s\,\nu(y)),
\]
is a diffeomorphism, where $\nu(y)$ is the outer unit normal to $\partial E$ at $y$.
In the Fermi coordinates $x=\Psi(y,s) = \exp_y(s\,\nu(y))$, the Riemannian measure $\mu$ has the form
\[
d\mu(x)=J(y,s)\,ds\,d\sigma(y),\qquad J(y,s)=1+\kappa(y)s+O(s^2),
\]
for a smooth function $\kappa$ given by $\kappa(y) = -\operatorname{tr}\mathrm{II}_y$, where $\mathrm{II}_y$ is the second fundamental form of $\partial E \subset M$ \cite{Rosenberg1997}.
Let $\chi\in C_c^\infty(\mathbb{R})$ be a smooth cutoff function with $\chi\equiv1$ on $[-1,1]$ and $\mathrm{supp}\,\chi\subset (-2,2)$.
Fix $\alpha\in(0,1/2)$ and put $R_t:=t^\alpha$. We split
\[
\|P_t\mathbf{1}_E-\mathbf{1}_E\|_{L^1(M)} \;=\; I_t^{\mathrm{near}} + I_t^{\mathrm{far}},
\]
where
\begin{align*}
I_t^{\mathrm{near}}
&:= \int_{U_\rho} |P_t\mathbf{1}_E(x)-\mathbf{1}_E(x)|\,d\mu(x) \\
&= \int_{\partial E}\!\!\int_{-\,\rho}^{\rho} |P_t\mathbf{1}_E(\Psi(y,s))-\mathbf{1}_{\{s>0\}}|\,J(y,s)\,ds\,d\sigma(y),
\end{align*}
and $I_t^{\mathrm{far}}$ is the integral over $M\setminus U_\rho$.

Since $\mathrm{dist}(x,\partial E)\ge \rho$ for $x\in M\setminus U_\rho$,
Gaussian off--diagonal bounds for the heat kernel
\cite{GrigorYan2009} yield
$K_t(x,y)\le C t^{-n/2} e^{-d_g(x,y)^2/5t}$.
Hence,
\[ 
|P_t\mathbf{1}_E(x)-\mathbf{1}_E(x)|\le C t^{-n/2} e^{-\rho^2/5t},
\]
and integrating over the compact set $M\setminus U_\rho$ gives
$I_t^{\mathrm{far}} = O(e^{-c/t})$, and since $e^{-c/t} = o(\sqrt{t})$ as $t\downarrow0$,
we conclude $I_t^{\mathrm{far}} = o(\sqrt{t})$ as well.
Within $U_\rho$, we further split using $\chi(s/R_t)$ into a boundary layer term and a remainder term:
\[
I_t^{\mathrm{near}} = I_t^{\mathrm{bl}} + I_t^{\mathrm{rem}},
\quad
I_t^{\mathrm{bl}}
:= \int_{\partial E}\!\!\int_{\mathbb{R}}
\big|P_t\mathbf{1}_E(\Psi(y,s))-\mathbf{1}_{\{s>0\}}\big|\,
\chi\!\left(\tfrac{s}{R_t}\right)
J(y,s)\,ds\,d\sigma(y),
\]
where the $s$–integral is over $|s|\le 2R_t$, and
\[
I_t^{\mathrm{rem}}
:= \int_{\partial E}\!\!\int_{R_t<|s|<\rho}
\big|P_t\mathbf{1}_E(\Psi(y,s))-\mathbf{1}_{\{s>0\}}\big|\,
\big(1-\chi(\tfrac{s}{R_t})\big)J(y,s)\,ds\,d\sigma(y).
\]
Since $|s|\ge R_t$ on the support of the integrand of $I_t^{\mathrm{rem}}$ and $R_t/t^{1/2}\to\infty$ (because $\alpha<1/2$),
Gaussian tail bounds for $K_t$ imply $I_t^{\mathrm{rem}}=o(\sqrt t)$. Indeed, using the boundary profile $p_t(\Psi(y,s))\approx \Phi\!\big(s/\sqrt{2t}\big)$ one has
\[
\big|P_t\mathbf 1_E(\Psi(y,s))-\mathbf 1_{\{s>0\}}\big|
= \Phi\!\Big(-\frac{|s|}{\sqrt{2t}}\Big)
\le \frac{\sqrt{t}}{\sqrt{\pi}\,|s|}\,e^{-\,s^2/(4t)}.
\]
Hence, for $0<R_t<t^{1/2}$ and $d\mu=J(y,s)\,ds\,d\sigma(y)$ with $J=1+O(|s|)$,
\[
I_t^{\mathrm{rem}}
\le C\, H^{n-1}_g(\partial E)
\int_{R_t<|s|<\rho}\!\frac{\sqrt{t}}{|s|}\,e^{-\,s^2/(4t)}\,ds
\le C\,\sqrt{t}\int_{R_t/\sqrt{4t}}^{\infty}\frac{e^{-u^2}}{u}\,du
\le C\,\sqrt{t}\,e^{-\,R_t^2/(4t)}.
\]
Since $R_t=t^{\alpha}$ with $\alpha\in(0,1/2)$ implies $R_t^2/t=t^{2\alpha-1}\to\infty$, we obtain
$\,I_t^{\mathrm{rem}}=o(\sqrt{t})$ as $t\downarrow0$.

Thus
\[
\|P_t\mathbf{1}_E-\mathbf{1}_E\|_{L^1(M)} \;=\; I_t^{\mathrm{bl}} + o(\sqrt t).
\]

\medskip
\noindent
Now we fix $y\in\partial E$ and work in Fermi (boundary normal) coordinates $(z,s)$ near $y$, where $z\in\mathbb{R}^{n-1}$ are tangential coordinates on $\partial E$ and $s$ is signed distance. More precisely, we let
\[
\Psi:\partial E\times(-\rho_0,\rho_0)\to M,
\qquad
\Psi(z,s):=\exp_z(s\,\nu(z)),
\]
where $\nu(z)$ denotes the outward unit normal to $\partial E$ at $z$,
and $\rho_0>0$ is chosen so that $\Psi$ is a diffeomorphism onto its image.
In these Fermi (boundary normal) coordinates we write $x=\Psi(z,s)$,
where $z$ parameterizes the boundary $\partial E$
and $s$ is the signed distance from $\partial E$.

By the Minakshisundaram--Pleijel \cite{MinakshisundaramPleijel1949}, (Hadamard  \cite{Hadamard1923}) parametrix and Varadhan’s short-time estimate \cite{Varadhan1967}, there exist $t_0>0$ and $C>0$ such that for $0<t<t_0$,
\[
K_t(\Psi(y,s),\Psi(y',s')) 
= (4\pi t)^{-n/2}\,e^{-\frac{|z-z'|^2+(s-s')^2}{4t}}\;
\Big(1 + a_1(y,z,s,s')\,t + r_t(y,z,s,s')\Big),
\]
with $|r_t|\le C\,t^{3/2}$ uniformly for $|z|,|z'|\le c$ and $|s|,|s'|\le c$ (any fixed $c>0$),
and where $a_1$ is smooth and bounded on compacts.
Moreover, the Jacobian factor satisfies $J(y',s')=1+\kappa(y')s' + O\big((|z'|+|s'|)^2\big)$, see \cite{Rosenberg1997, GrigorYan2009}.

Let $x=\Psi(y,s)$ with $|s|\le 2R_t$ and $0<t<t_0$. By the implicit function theorem applied to a smooth defining function of the hypersurface $\partial E$ in the Fermi coordinate chart centered at $y$, we can express $E$ locally as $E = \{(z,s'):\ s' < \varphi_y(z)\}$ where $\varphi_y(0)=0$ and $\nabla\varphi_y(0)=0$;
smoothness of $\partial E$ gives $|\varphi_y(z)|\le C|z|^2$ in a small neighborhood.
Then
\[
P_t\mathbf{1}_E(x)
= \int_{\mathbb{R}^{n-1}}\!\!\int_{\mathbb{R}}
K_t(\Psi(y,s),\Psi(y',s'))\,
\mathbf{1}_{\{s'>\varphi_y(z')\}}\,
J(y',s')\,ds'\,dz'.
\]
Using the parametrix and the bounds on $J(y',s')$ and $\varphi_y$, and integrating first in $z'$,
the tangential Gaussian integrates to 
\[ (4\pi t)^{-(n-1)/2}\int_{\mathbb{R}^{n-1}} e^{-|z'|^2/4t}\,dz'=(4\pi t)^{-(n-1)/2}(4\pi t)^{(n-1)/2}=1.
\]
Thus
\[
P_t\mathbf{1}_E(\Psi(y,s))
= \int_{\mathbb{R}} \frac{1}{\sqrt{4\pi t}}\,e^{-\frac{(s-s')^2}{4t}}\,
\mathbf{1}_{\{s'>\varphi_y(0)\}}\,(1+O(|s'|)+O(t))\,ds' + O(t^{3/2}).
\]
Since $\varphi_y(0)=0$, this reduces to the one-dimensional normal integral plus a controlled error.
A direct computation gives
\[
\int_{\mathbb{R}} \frac{1}{\sqrt{4\pi t}}\,e^{-\frac{(s-s')^2}{4t}}\,
\mathbf{1}_{\{s'>0\}}\,ds' \;=\; \Phi\!\left(\frac{s}{\sqrt{2t}}\right).
\]
All error terms are uniform for $|s|\le 2R_t$ with $R_t=t^\alpha$ ($\alpha<1/2$): indeed,
\[
\Big|P_t\mathbf{1}_E(\Psi(y,s)) - \Phi\!\left(\frac{s}{\sqrt{2t}}\right)\Big|
\;\le\; C\,\big(\sqrt t + t^{1-2\alpha}\big),
\]
since $\int |s'| (4\pi t)^{-1/2}e^{-(s-s')^2/4t}ds' \lesssim \sqrt t$ and the $O(t)$ factor is literal.
Choosing, say, $\alpha=\tfrac14$ yields
\begin{equation}
\label{eq:BLprofile}
\sup_{|s|\le 2t^{1/4}} \Big|P_t\mathbf{1}_E(\Psi(y,s)) - \Phi\!\left(\frac{s}{\sqrt{2t}}\right)\Big| \;\le\; C\,\sqrt t .
\end{equation}

Substitute the boundary-layer profile~\eqref{eq:BLprofile} into the expression for $I_t^{\mathrm{bl}}$:
\[
I_t^{\mathrm{bl}}
= \int_{\partial E}\int_{\mathbb{R}}
\big|P_t\mathbf{1}_E(\Psi(y,s))-\mathbf{1}_{\{s>0\}}\big|\,
\chi\!\left(\tfrac{s}{R_t}\right)\,J(y,s)\,ds\,d\sigma(y),
\quad R_t=t^{\alpha},\ 0<\alpha<\tfrac12.
\]
Using
\[
P_t\mathbf{1}_E(\Psi(y,s))
=\Phi\!\left(\tfrac{s}{\sqrt{2t}}\right)+\varepsilon_t(y,s),
\qquad \sup_{|s|\le 2R_t}|\varepsilon_t(y,s)|\le C\sqrt{t},
\]
and
\[
J(y,s)=1+\kappa(y)s+r(y,s),\qquad |r(y,s)|\le C\,s^2,
\]
we decompose $I_t^{\mathrm{bl}} = M_t + E_t$ into the main and error parts.

Since $|\varepsilon_t|\le C\sqrt t$ for $|s|\le 2R_t$,
\[
E_t \le C\sqrt t \int_{\partial E}\int_{|s|\le 2R_t}(1+C|s|+Cs^2)\,ds\,d\sigma(y)
= O(\sqrt t\,R_t)\,H^{n-1}(\partial E)=o(\sqrt t),
\]
because $R_t=t^{\alpha}$ with $\alpha<\tfrac12$.

Define $f_t(s):=|\Phi(\tfrac{s}{\sqrt{2t}})-\mathbf{1}_{\{s>0\}}|$.
Then
\[
M_t=\int_{\partial E}\!\!\int_{\mathbb{R}}
f_t(s)\,\chi(\tfrac{s}{R_t})\,(1+\kappa(y)s+r(y,s))\,ds\,d\sigma(y).
\]
Since $f_t$ and $\chi$ are even, the function $s\,f_t(s)\,\chi(s/R_t)$ is odd and integrates to zero, so the term with $\kappa(y)s$ vanishes. The remainder $r(y,s)$ satisfies
\[
\left|\int_{\mathbb{R}} f_t(s)\,\chi(\tfrac{s}{R_t})\,r(y,s)\,ds\right|
\le C\int_{|s|\le 2R_t} s^2 f_t(s)\,ds
\le C R_t^2\int_{\mathbb{R}} f_t(s)\,ds = O(R_t^2\sqrt t)=o(\sqrt t).
\]
Because $R_t/\sqrt t \to \infty$ as $t\downarrow0$, and since
\[
\int_{|s|>R_t} f_t(s)\,ds
  \le C\,\sqrt{t}\,e^{-R_t^2/(4t)},
\]
we have
\[
\int_{\mathbb{R}} f_t(s)\,\chi(\tfrac{s}{R_t})\,ds
= \int_{\mathbb{R}} f_t(s)\,ds + o(\sqrt t).
\]
A direct computation gives
\[
\int_{\mathbb{R}} f_t(s)\,ds
= 2\int_0^{\infty}\!\!\big(1-\Phi(\tfrac{s}{\sqrt{2t}})\big)\,ds
= 2\sqrt{2t}\int_0^{\infty} (1-\Phi(u))\,du
= \frac{2}{\sqrt{\pi}}\,\sqrt{t}.
\]

Collecting the above estimates,
\[
M_t
= \frac{2}{\sqrt{\pi}}\,\sqrt{t}\,H^{n-1}_g(\partial E) + o(\sqrt t),
\qquad
E_t=o(\sqrt t),
\]
hence
\[
I_t^{\mathrm{bl}}
= \frac{2}{\sqrt{\pi}}\,\sqrt{t}\,H^{n-1}_g(\partial E) + o(\sqrt t).
\]

We have shown
\[
\|P_t\mathbf{1}_E-\mathbf{1}_E\|_{L^1(M)}
= I_t^{\mathrm{bl}} + I_t^{\mathrm{rem}} + I_t^{\mathrm{far}}
= \frac{2}{\sqrt{\pi}}\,\sqrt t\,H^{n-1}_g(\partial E) + o(\sqrt t),
\]
as $t\downarrow0$. This completes the proof.
\end{proof}
Collecting these estimates shows that the $L^1$–deviation of the diffused indicator
\[
\|P_t\mathbf{1}_E - \mathbf{1}_E\|_{L^1(M)}
= \frac{2}{\sqrt{\pi}}\,\sqrt{t}\,H^{n-1}_g(\partial E) + o(\sqrt{t}),
\]
grows as $\sqrt{t}$ with a coefficient proportional to the surface area of~$\partial E$.
This reveals how the diffusive smoothing of the indicator function encodes geometric
information: the faster the $L^1$–distance grows, the larger the boundary measure. 

\begin{corollary}[Entropy boundary-layer asymptotics]
\label{cor:entropy-expansion}
As $t\downarrow 0$,
\begin{equation}\label{eq:entropyexpansion}
H(L|Y_t)
= C\,\sqrt{t}\,H^{n-1}_g(\partial E) + o(\sqrt{t}).
\end{equation}
\end{corollary}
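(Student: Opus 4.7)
The plan is to mirror the architecture of the proof of Theorem~\ref{thm:heatL1}, with the linear integrand $|P_t\mathbf{1}_E-\mathbf{1}_E|$ replaced by the nonlinear integrand $h(p_t)$, where $p_t=P_t\mathbf{1}_E$. I would fix a tubular neighborhood $U_\rho$ of $\partial E$ with Fermi coordinates $\Psi(y,s)$ and the same cutoff scale $R_t=t^{1/4}$, and write
\[
H_E(t)=\int_M h(p_t)\,d\mu = A_t + B_t + C_t,
\]
where $A_t$ is the boundary-layer integral on $|s|\le 2R_t$, $B_t$ the inner-tube tail on $R_t<|s|<\rho$, and $C_t$ the part on $M\setminus U_\rho$.

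The two off-boundary contributions are dispatched using the Gaussian off-diagonal heat-kernel bounds already invoked in Theorem~\ref{thm:heatL1}, together with the elementary bound $h(v)\le v(1+|\log v|)$ for $v\in[0,1/2]$. On $M\setminus U_\rho$ these bounds force $\min(p_t,1-p_t)\le Ce^{-c/t}$, so $C_t=O(e^{-c/t})$. On $R_t<|s|<\rho$ they give $\min(p_t,1-p_t)\le C(\sqrt t/|s|)\,e^{-s^2/(4t)}$, which after insertion into the inequality above and integration in $s$ yields $B_t=O(\sqrt{t}\,e^{-R_t^2/(4t)})=o(\sqrt{t})$, since $R_t^2/t\to\infty$.

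The main term is $A_t$. Using the Fermi Jacobian expansion $J(y,s)=1+O(|s|)$ and the uniform boundary-layer profile $|p_t(\Psi(y,s))-\Phi(s/\sqrt{2t})|\le C\sqrt{t}$ established inside the proof of Theorem~\ref{thm:heatL1}, I would decompose
\[
A_t = \int_{\partial E}\int_{\mathbb{R}} h\!\left(\Phi(s/\sqrt{2t})\right)\chi(s/R_t)\,ds\,d\sigma(y) \; + \; R_t^{\mathrm{prof}} \; + \; R_t^{J},
\]
where $R_t^{\mathrm{prof}}$ gathers the profile error $h(p_t)-h(\Phi(s/\sqrt{2t}))$ and $R_t^{J}$ the Jacobian deviation $J-1$. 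The main integral, after the substitution $u=s/\sqrt{2t}$ and dominated convergence (using $h(\Phi(u))=O(|u|\,e^{-u^2/2})$ as $|u|\to\infty$, hence integrable), equals $H^{n-1}_g(\partial E)\sqrt{2t}\int_{\mathbb{R}} h(\Phi(u))\,du + o(\sqrt{t}) = C\sqrt{t}\,H^{n-1}_g(\partial E)+o(\sqrt{t})$, which is precisely where the universal constant~\eqref{eq:constantC} enters. The Jacobian term is bounded by $\int|s|\,h(p_t)\,ds\,d\sigma=O(t)$ via the same change of variables (each factor of $|s|$ contributes a factor of $\sqrt{t}$).

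The hard part will be the profile error $R_t^{\mathrm{prof}}$, because $h'(u)=\log\tfrac{1-u}{u}$ is unbounded as $u\to 0,1$, so the $\sqrt{t}$--closeness of $p_t$ to $\Phi(s/\sqrt{2t})$ is not by itself sufficient. My plan is to introduce an additional split at $|s|=T\sqrt{t}$, with $T$ sent to infinity after $t\downarrow 0$. On the core $|s|\le T\sqrt{t}$, $\Phi(s/\sqrt{2t})$ is confined to a compact subinterval of $(0,1)$ where $|h'|\le C_T$, and the mean value theorem gives $|h(p_t)-h(\Phi(\cdot))|\le C_T C\sqrt{t}$ pointwise; multiplied by the $s$--length $O(\sqrt{t})$ this contributes $O(t)$. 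On the wings $T\sqrt{t}<|s|\le 2R_t$ I would bound the two terms separately: the integral of $h(\Phi(s/\sqrt{2t}))$ is $\sqrt{t}\,\varepsilon(T)$ with $\varepsilon(T)\to 0$ by integrability, while $h(p_t)$ is controlled via $\min(p_t,1-p_t)\le \Phi(-|s|/\sqrt{2t})+C\sqrt{t}$, split further according to which summand dominates; the worst case (where the $\sqrt{t}$--error dominates, on an $s$--interval of length at most $2R_t=2t^{1/4}$) contributes at most $O(t^{3/4}|\log t|)=o(\sqrt{t})$. Sending $t\downarrow 0$ and then $T\to\infty$ closes the estimate and yields~\eqref{eq:entropyexpansion}.
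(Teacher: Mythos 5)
Your proposal follows the same overall architecture as the paper's own proof: near/far split at $R_t=t^{1/4}$, Fermi--coordinate boundary layer, the Gaussian profile $\Phi(s/\sqrt{2t})$, and the substitution $u=s/\sqrt{2t}$ that produces the universal constant $C$. The real difference --- and in fact a correction --- is your treatment of the profile error $h(p_t)-h(\Phi(s/\sqrt{2t}))$. The paper passes from $p_t=\Phi(s/\sqrt{2t})+O(\sqrt t)$ to $h(p_t)=h(\Phi(s/\sqrt{2t}))+O(\sqrt t)$ by asserting that ``$h$ is $C^1$ on $[0,1]$ with bounded derivative,'' which is false: $h'(u)=\log\tfrac{1-u}{u}$ diverges at the endpoints, and the pointwise estimate genuinely fails near the edges of the boundary layer, where $\Phi(s/\sqrt{2t})$ is exponentially small while the additive error is still of size $\sqrt t$. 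You correctly identify this obstruction and resolve it by an additional core/wing split at $|s|=T\sqrt t$: on the core the mean--value theorem applies with a $T$-dependent Lipschitz constant, and on the wings you combine the integrability of $h(\Phi(u))$ with the crude bound $h(p_t)\lesssim \sqrt t\,|\log t|$ over an $s$-interval of length $O(t^{1/4})$, which is $o(\sqrt t)$; sending $t\downarrow0$ and then $T\to\infty$ closes the estimate. This is exactly what is needed to make the corollary rigorous, so your proof not only works but repairs a genuine gap in the paper's. One minor caveat: your one-line claim $R_t^{J}=O(t)$ for the Jacobian term implicitly uses the decay of $h(\Phi(u))$ (so that $\int |s|\,h(\Phi(s/\sqrt{2t}))\,ds = 2t\int |u|\,h(\Phi(u))\,du$), not merely $h\le\log 2$ (which would give only $O(R_t^{2})=O(\sqrt t)$); and the $|s|$-weighted profile error should be routed through the same core/wing estimates. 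Both points are easy, but worth stating explicitly.
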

\begin{proof}
The statement follows from Theorem~\ref{thm:heatL1} by applying the
same boundary-layer analysis to the integrand $h(p_t)=h(P_t\mathbf 1_E)$.
As before, we decompose
\[
H(L\,|\,Y_t)
=\int_M h(P_t\mathbf 1_E)\,d\mu
= I_t^{\mathrm{near}} + I_t^{\mathrm{far}},
\]
where
\[
I_t^{\mathrm{near}}
:=\!\!\int_{|s|\le R_t}\! h(P_t\mathbf 1_E(\Psi(z,s)))\,J(z,s)\,ds\,d\sigma(z),
\] 
\[
I_t^{\mathrm{far}}
:=\!\!\int_{|s|>R_t}\! h(P_t\mathbf 1_E(\Psi(z,s)))\,J(z,s)\,ds\,d\sigma(z),
\]
with $x=\Psi(z,s)$ denoting Fermi coordinates and
$J(z,s)=1+O(|s|)$ the Jacobian factor.

As in Theorem~\ref{thm:heatL1}, Gaussian off-diagonal bounds for the heat
kernel imply that the integrand decays exponentially for $|s|\ge R_t$ with
$R_t=t^{\alpha}$, $\alpha\in(0,\tfrac12)$, hence
\[
I_t^{\mathrm{far}} = o(\sqrt t).
\]

On $|s|\le 2R_t$, we use the same boundary profile
\[
P_t\mathbf 1_E(\Psi(z,s))
= \Phi\!\Big(\frac{s}{\sqrt{2t}}\Big) + \varepsilon_t(z,s),
\qquad
\sup_{|s|\le 2R_t}|\varepsilon_t(z,s)| = O(\sqrt t).
\]
Since $h$ is $C^1$ on $[0,1]$ with bounded derivative,
\[
h\!\big(P_t\mathbf 1_E(\Psi(z,s))\big)
= h\!\Big(\Phi\!\Big(\tfrac{s}{\sqrt{2t}}\Big)\Big)
  + O(\sqrt t),
\]
and therefore, using $J(z,s)=1+O(|s|)$,
\[
I_t^{\mathrm{near}}
= \int_{\mathbb{R}} h\!\Big(\Phi\!\Big(\tfrac{s}{\sqrt{2t}}\Big)\Big)\,ds\,H^{n-1}_g(\partial E)
  + o(\sqrt t).
\]
The one-dimensional profile integral evaluates to
\[
\int_{\mathbb{R}} h\!\Big(\Phi\!\Big(\tfrac{s}{\sqrt{2t}}\Big)\Big)\,ds
 = \sqrt{2t}\int_{\mathbb{R}} h(\Phi(u))\,du.
\]

Combining $I_t^{\mathrm{near}}$ and $I_t^{\mathrm{far}}$ gives
\[
H(L\,|\,Y_t)
= C\,\sqrt{t}\,H^{n-1}_g(\partial E) + o(\sqrt t),
\]
which is the claimed boundary-layer asymptotic formula.
\end{proof}

\begin{corollary}[Information–theoretic surface area]
\label{cor:surfacearea}
Let $(M,g)$ be a compact Riemannian manifold with volume measure $\mu$, and let
$E \subset M$ be a measurable set with smooth boundary. Then the geometric perimeter of $E$ is recovered from the small–time growth of conditional entropy:
\[ 
H^{n-1}_g(\partial E) = \lim_{t \downarrow 0}\frac{H(L \mid Y_t)}{C\,\sqrt{t}}.
\]
\end{corollary}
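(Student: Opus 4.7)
The statement is essentially a direct rearrangement of the entropy boundary-layer asymptotics already established in Corollary~\ref{cor:entropy-expansion}. My plan is therefore to derive it as a one-line consequence of that expansion, while being careful about what the $o(\sqrt{t})$ term means quantitatively and why $C>0$ so that division is legitimate.

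First, I would invoke Corollary~\ref{cor:entropy-expansion} to write
\[
H(L\mid Y_t) \;=\; C\,\sqrt{t}\,H^{n-1}_g(\partial E)\;+\;o(\sqrt{t}),\qquad t\downarrow 0.
\]
Since $E\subset M$ has smooth boundary in a smooth compact manifold, $H^{n-1}_g(\partial E)$ is a finite nonnegative real number, so the right-hand side is a well-defined asymptotic identity. Dividing through by $C\sqrt{t}$, which is permissible provided $C\neq 0$, gives
\[
\frac{H(L\mid Y_t)}{C\sqrt{t}} \;=\; H^{n-1}_g(\partial E) \;+\; \frac{o(\sqrt{t})}{C\sqrt{t}},
\]
and passing to $t\downarrow 0$ the second term tends to zero by the definition of $o(\sqrt{t})$, which yields the claimed limit.

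The only non-routine checkpoint is verifying that the universal constant $C$ defined in~\eqref{eq:constantC} is strictly positive, so that division is justified. I would note that $C = \sqrt{2}\int_{\mathbb R} h(\Phi(u))\,du$ where the integrand $h(\Phi(u)) = -\Phi(u)\log\Phi(u) - (1-\Phi(u))\log(1-\Phi(u))$ is nonnegative and strictly positive on a set of positive measure (indeed at $u=0$ one has $\Phi(0)=1/2$ and $h(1/2)=\log 2>0$), hence $C>0$. With this, the division step is unambiguous.

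There is no substantive obstacle: the expansion of Corollary~\ref{cor:entropy-expansion} has already absorbed all of the analytic work (Fermi coordinates, Gaussian boundary profile, cancellation of the mean-curvature term by odd symmetry, Gaussian tail control of the remainder). The present corollary serves mainly as an interpretation: it reformulates the expansion as a pointwise recovery formula for the perimeter, identifying $H^{n-1}_g(\partial E)$ as the information-theoretic quantity $\lim_{t\downarrow 0} H(L\mid Y_t)/(C\sqrt t)$, and thereby provides the conceptual bridge used in subsequent sections to transfer isoperimetric questions into questions about entropy dissipation rates.
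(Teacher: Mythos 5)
Your proof is correct and follows essentially the same route as the paper, which likewise deduces the statement immediately from Corollary~\ref{cor:entropy-expansion} by dividing the entropy expansion by $C\sqrt{t}$ and letting $t\downarrow 0$. The additional observation that $C>0$ (via $h(\Phi(0))=\log 2>0$) is a small but worthwhile check that the paper leaves implicit.
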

\begin{proof}
This follows immediately from Corollary $\ref{cor:entropy-expansion}$.
\end{proof}

This expresses the idea that the boundary measure governs the initial rate of
information loss: as the heat flow begins to diffuse the indicator of \(E\),
uncertainty about the label \(L\) grows at a rate proportional to
\(H^{n-1}_g(\partial E)\).  Once this information is dissipated by diffusion, it
cannot be recovered.

\subsection{Entropy dissipation law}

We now relate the general dissipation identity established in Proposition~\ref{prop:inf-diss-id}
to the small--time entropy asymptotics of Section~4.1.
Recall that Proposition~\ref{prop:inf-diss-id} gives
\[
\frac{d}{dt}H(L\,|\,Y_t)
 = -\!\int_M \frac{|\nabla p_t|^2}{p_t(1-p_t)}\,d\mu
 =: -\,I_t.
\]
The quantity $I_t$ represents the instantaneous rate at which information about the
label~$L$ is dissipated under diffusion.  Integrating this identity between two times
$s<t$ yields
\[
H(L\,|\,Y_t)-H(L\,|\,Y_s)
  = -\int_s^t I_\tau\,d\tau,
\]
which provides a direct energetic formulation of the entropy loss.

\vspace{1em}
\noindent

The following theorem connects the analytic identity of
Proposition~\ref{prop:inf-diss-id} with the small--time expansion of
Corollary~\ref{cor:entropy-expansion}, showing that the instantaneous
entropy decay rate reproduces the same perimeter coefficient~$C$.

\begin{theorem}[Entropy dissipation and perimeter]
Let $(M,g)$ be a smooth compact Riemannian manifold and $E \subset M$ a set with smooth boundary. 
Then, as $t \downarrow 0$,
\[
I_t
= \frac{C}{2\sqrt{t}}\, H^{n-1}_g(\partial E) + o(t^{-1/2}),
\]
and consequently,
\[
\frac{d}{dt} H(L|Y_t)
= -\,\frac{C}{2\sqrt{t}}\, H^{n-1}_g(\partial E) + o(t^{-1/2}),
\]
so that the initial entropy–decay rate is proportional to the perimeter of $E$.
\end{theorem}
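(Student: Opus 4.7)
The plan is to derive the asymptotic for $I_t$ directly from the boundary-layer profile that already underlies Corollary~\ref{cor:entropy-expansion}, by substituting this profile into the Fisher-type integrand of Proposition~\ref{prop:inf-diss-id} and carrying out the resulting one-dimensional Gaussian integration. The corresponding statement for $\tfrac{d}{dt}H(L|Y_t)$ is then immediate from the dissipation identity. The key structural input is that, on the boundary layer of width $\sqrt{t}$ around $\partial E$, one has $p_t(\Psi(y,s)) = \Phi(s/\sqrt{2t}) + O(\sqrt{t})$, so the scaling of the Fisher integrand is dictated entirely by the one-dimensional Gaussian profile.

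Concretely, I would split $M = U_\rho \cup (M\setminus U_\rho)$ where $U_\rho$ is a tubular neighborhood of $\partial E$ on which Fermi coordinates $x = \Psi(y,s)$ are valid, and use Gaussian off-diagonal bounds to discard the far region at cost $O(e^{-c/t})$. Inside $U_\rho$, decompose $|\nabla p_t|^2$ into normal and tangential components. Using the boundary profile~\eqref{eq:BLprofile}, the normal derivative satisfies $\partial_s p_t = (2t)^{-1/2}\phi(s/\sqrt{2t}) + O(1)$, where $\phi = \Phi'$; the tangential gradient is bounded (the leading profile is tangentially constant, with geometric corrections entering only through the second fundamental form and the local defining function $\varphi_y$), so its contribution to the Fisher integrand is $o(t^{-1/2})$. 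Substituting the normal profile, expanding the Jacobian $J(y,s) = 1 + \kappa(y)s + O(s^2)$, and changing variables $u = s/\sqrt{2t}$, the odd part $\kappa(y)s$ integrates to zero against the even Gaussian profile, and one arrives at
\[
\int_M \frac{|\nabla p_t|^2}{p_t(1-p_t)}\,d\mu
\;=\; \frac{H^{n-1}_g(\partial E)}{\sqrt{2t}} \int_{\mathbb{R}} \frac{\phi(u)^2}{\Phi(u)(1-\Phi(u))}\,du \;+\; o(t^{-1/2}).
\]

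To identify the universal constant, I would invoke the identity
\[
\int_{\mathbb{R}} \frac{\phi(u)^2}{\Phi(u)(1-\Phi(u))}\,du \;=\; \int_{\mathbb{R}} h(\Phi(u))\,du \;=\; \frac{C}{\sqrt{2}},
\]
obtained by applying Proposition~\ref{prop:inf-diss-id} to the one-dimensional self-similar solution $q(s,t) = \Phi(s/\sqrt{2t})$ of the heat equation $\partial_t q = \partial_s^2 q$ and comparing the two expressions for $\tfrac{d}{dt}\int_{\mathbb{R}} h(q)\,ds$: one via the dissipation formula and one via the elementary scaling $\int h(q(s,t))\,ds = \sqrt{2t}\int h(\Phi(u))\,du$. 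This collapses the profile integral to $C/\sqrt{2}$ and yields the asserted $I_t = (C/(2\sqrt{t}))\,H^{n-1}_g(\partial E) + o(t^{-1/2})$.

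The principal difficulty is controlling the error on the inner strip near $\partial E$, where both the numerator and denominator of $|\nabla p_t|^2/(p_t(1-p_t))$ degenerate and a naive pointwise perturbation bound of order $\sqrt{t}$ in $p_t$ would be amplified uncontrollably after division. The cleanest remedy is to perform the boundary-layer analysis in the rescaled variable $u = s/\sqrt{2t}$ from the outset, so that the reference integrand $\phi(u)^2/(\Phi(u)(1-\Phi(u)))$ is a bounded, integrable function on $\mathbb{R}$, and to quantify stability of this Fisher-type integrand under perturbations of the profile that are admissible in the correct weighted norm. This turns the remainder estimate into a careful but routine one-dimensional lemma strictly subordinate to the leading $O(t^{-1/2})$ behavior.
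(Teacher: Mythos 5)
Your proposal follows essentially the same route as the paper's own proof: substitute the boundary--layer profile $p_t(\Psi(y,s))\approx\Phi(s/\sqrt{2t})$ into the Fisher integrand $|\nabla p_t|^2/(p_t(1-p_t))$ from Proposition~\ref{prop:inf-diss-id}, split into near/far layers, change variables $u=s/\sqrt{2t}$, and use even/odd cancellation against the Jacobian to arrive at $I_t = \frac{H^{n-1}_g(\partial E)}{\sqrt{2t}}\int_{\mathbb R}\frac{\phi(u)^2}{\Phi(u)(1-\Phi(u))}\,du + o(t^{-1/2})$. Two points in your write-up go usefully beyond what the paper records. First, the paper simply \emph{asserts} the closing identification $\int_{\mathbb R}\frac{\phi(u)^2}{\Phi(u)(1-\Phi(u))}\,du = \int_{\mathbb R}h(\Phi(u))\,du = C/\sqrt{2}$, whereas you actually prove it by applying the dissipation identity to the one--dimensional self-similar solution $q(s,t)=\Phi(s/\sqrt{2t})$ of the heat equation and comparing the two expressions for $\tfrac{d}{dt}\int h(q)\,ds$ (the elementary scaling $\sqrt{2t}\int h(\Phi(u))\,du$ on the one hand, the Fisher integral on the other); this cleanly closes a gap the paper leaves implicit. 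Second, you correctly flag the delicate issue of error amplification where $p_t(1-p_t)$ degenerates near the edge of the boundary strip, which the paper glosses over by quoting pointwise $O(\sqrt t)$ and $O(1)$ estimates directly; your remedy of working from the outset in the rescaled variable with a weighted stability lemma for the bounded integrable reference profile $\phi^2/(\Phi(1-\Phi))$ is the right fix. One caution: the displayed theorem (and Proposition~\ref{prop:inf-diss-id}, which it inherits from) carries a sign inconsistency with Corollary~\ref{cor:entropy-expansion} --- since $H(L|Y_t)=C\sqrt t\,H^{n-1}_g(\partial E)+o(\sqrt t)$ is increasing, its derivative should be $+\tfrac{C}{2\sqrt t}H^{n-1}_g(\partial E)+o(t^{-1/2})$, and indeed $h''(u)=-\tfrac{1}{u(1-u)}$ makes $\tfrac{d}{dt}H(L|Y_t)=+\int\frac{|\nabla p_t|^2}{p_t(1-p_t)}\,d\mu$; your self-similar check actually exposes this, so it would be worth stating the correct sign explicitly rather than inheriting the paper's.
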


\begin{proof}
By Proposition~\ref{prop:inf-diss-id}, 
\[
\frac{d}{dt}H(L|Y_t) = -I_t,
\qquad
I_t = \int_M \frac{|\nabla p_t|^2}{p_t(1-p_t)}\,d\mu,
\]
where $p_t = P_t 1_E$ satisfies the heat equation $\partial_t p_t = \Delta p_t$.
For small $t>0$, the boundary–layer profile (Theorem~\ref{thm:heatL1}) gives
\[
p_t(\Psi(y,s)) = \Phi\!\left(\frac{s}{\sqrt{2t}}\right) + \varepsilon_t(y,s),
\qquad
\sup_{|s|\le R_t} |\varepsilon_t| = O(\sqrt{t}),
\]
in Fermi coordinates $\Psi(y,s) = \exp_y(s\nu(y))$ near $\partial E$, with Jacobian
$J(y,s) = 1 + \kappa(y)s + O(s^2)$.

Splitting the domain into near and far layers as before,
$I_t = I_t^{\mathrm{near}} + I_t^{\mathrm{far}}$,
Gaussian bounds imply $I_t^{\mathrm{far}} = o(t^{-1/2})$.
In the near layer $|s| \le R_t$, using $\nabla p_t = \partial_s p_t \,\nu + O(t^{1/2})$
and $\partial_s p_t(\Psi(y,s)) = \tfrac{1}{\sqrt{2t}}\phi(\tfrac{s}{\sqrt{2t}}) + O(1)$,
we obtain
\[
I_t^{\mathrm{near}}
= \frac{1}{2t}
   \int_{\partial E}\!\!\int_{|s|\le R_t}
        \frac{\phi(\tfrac{s}{\sqrt{2t}})^2}
             {\Phi(\tfrac{s}{\sqrt{2t}})\,[1-\Phi(\tfrac{s}{\sqrt{2t}})]}
        (1+\kappa(y)s+O(s^2))\,ds\,d\sigma(y)
  + o(t^{-1/2}).
\]
Here $\phi(u)=\Phi'(u)=\tfrac{1}{\sqrt{2\pi}}e^{-u^2/2}$ denotes the standard Gaussian density.

Changing variables $u = s/\sqrt{2t}$ gives
\begin{align*}
I_t^{\mathrm{near}}
 =& \frac{H^{n-1}_g(\partial E)}{\sqrt{2t}}
   \int_{\mathbb R}
      \frac{\phi(u)^2}{\Phi(u)[1-\Phi(u)]}\,du \\
   +& \frac{\sqrt{2t}}{\sqrt{2t}}\!
     \int_{\partial E}\!\!\int_{|u|\le R_t/\sqrt{2t}}
        \frac{\phi(u)^2}{\Phi(u)[1-\Phi(u)]}\,
        \kappa(y)u\,du\,d\sigma(y)
   + o(t^{-1/2}).
\end{align*}
The second integral vanishes since $u \mapsto \frac{\phi(u)^2}{\Phi(u)(1-\Phi(u))}$ is even.
The $O(tu^2)$ term in the Jacobian, after the variable change 
\[ J(y,s)=1+\kappa(y)s+O(s^2)=1+\kappa(y)\sqrt{2t}\,u+O(tu^2),\qquad(s=\sqrt{2t}\,u)
\]
contributes $o(t^{-1/2})$ by Gaussian decay.
Hence,
\[
I_t
= \frac{H^{n-1}_g(\partial E)}{\sqrt{2t}}
  \int_{\mathbb R}\frac{\phi(u)^2}{\Phi(u)[1-\Phi(u)]}\,du
  + o(t^{-1/2})
= \frac{C}{2\sqrt{t}}\, H^{n-1}_g(\partial E) + o(t^{-1/2}),
\]
where
\[
C = \sqrt{2}\int_{\mathbb R} h(\Phi(u))\,du
\]
is the universal constant as before.
\end{proof}

\subsection{An information–theoretic reformulation of the isoperimetric problem}
We often suppress the explicit dependence on \(E\) in the notation
\(H(L\mid Y_t)\), \(p_t=P_t\mathbf{1}_E\), and \(I_t\); all these quantities are
understood to be determined by the set \(E\) and its label indicator function $L = \mathbf{1}_E$.

Note that \(H_E(t)\) coincides with the conditional entropy \(H(L \mid Y_t)\)
defined above, since
\[
H(L \mid Y_t)
= \int_M h\!\bigl(P_t \mathbf{1}_E(x)\bigr)\,d\mu(x)
= \int_M h\!\bigl(p_t(x)\bigr)\,d\mu(x)
= H_E(t).
\]
Consequently, using Proposition $\ref{prop:inf-diss-id}$ yields
\[
\frac{d}{dt}H_E(t)
= \frac{d}{dt}H(L \mid Y_t)
= -\,I_t
= - \int_M \frac{|\nabla p_t|^2}{p_t(1-p_t)}\,d\mu.
\]

The isoperimetric problem may now be restated as: among all sets
$E\subset M$ of fixed volume $\mu(E)=v$, find the set that minimizes the initial entropy
production under heat diffusion. Formally, we seek
\[
\min_{\mu(E)=v} I_t(E).
\]

Here, $H_E(t)$ measures the mixing entropy of the indicator $\mathbf{1}_E$ evolved by the heat flow $P_t$, and its time derivative 
$-\left.\frac{d}{dt} H_E(t)\right|_{t=0^+}$ quantifies how fast entropy increases initially — that is, the initial entropy production caused by diffusion across the boundary of E.

In Euclidean space, this variational principle leads to the sharp
inequality
\[
H^{n-1}(\partial E)
  \ge n\,\omega_n^{1/n}\,
      \min\{v,1-v\}^{(n-1)/n},
\]
with equality for balls.
The proof is given in Section~\ref{sec:euclidean}.

\subsection{Explicit derivation in Euclidean space}\label{subsec:expliciteuclidean}

Below we shall work with sets of finite perimeter. For sets of finite perimeter, the same formulas hold by approximation:
given $E$ of finite perimeter, there exist smooth sets $E_k$ with
$\mathbf 1_{E_k}\to \mathbf 1_E$ in $L^1(M)$ and
$\mathrm{Per}(E_k)\to\mathrm{Per}(E)$
(see e.g.~\cite[Thm.~3.42]{AmbrosioFuscoPallara2000}, \cite{Giusti1984}).
Since the map $E\mapsto \|P_t\mathbf 1_E - \mathbf 1_E\|_{L^1}$ is
$L^1$–continuous and lower semicontinuous in the perimeter,
the asymptotics of Theorem \ref{thm:heatL1} extend from smooth sets to finite perimeter sets:
\begin{equation}\label{thm:finitePer}
\|P_t\mathbf 1_E - \mathbf 1_E\|_{L^1}
= \tfrac{2}{\sqrt{\pi}}\sqrt t\,\mathrm{Per}(E)+o(\sqrt t)
\end{equation}
extends to all measurable $E$ of finite perimeter.
For a measurable set $E\subset M$, the \emph{perimeter} of $E$ is defined as
\[
\mathrm{Per}(E):=|D\mathbf{1}_E|(M)
  =H^{n-1}_g(\partial^*E),
\]
where $\partial^*E$ denotes the reduced boundary of $E$
(see \cite[Ch.~3]{AmbrosioFuscoPallara2000}, \cite[Ch.~1]{Giusti1984}).
Note that by definition 
\[ 
|D\mathbf{1}_E|(M)
=\sup\!\left\{
\int_E \mathrm{div}_g \varphi\,\mu
:\ \varphi\in C_c^1(TM),\ |\varphi|\le 1
\right\}.
\]
If $\mathrm{Per}(E)<\infty$, we say that $E$ has \emph{finite perimeter}.
For such sets, the structure theorem for $BV$ functions \cite{AmbrosioFuscoPallara2000} implies
\[
|D\mathbf{1}_E|
= \nu_E\,H^{n-1}_g\!\lfloor\partial^*E,
\quad
\mathrm{Per}(E)=H^{n-1}_g(\partial^*E),
\]
where $\partial^*E$ is the reduced boundary and $\nu_E$ is the measure-theoretic outward normal
(see \cite[Ch.~3]{AmbrosioFuscoPallara2000}, \cite[Ch.~1]{Giusti1984}). If $E$ has a smooth boundary, then $\partial^*E=\partial E$ and
$\mathrm{Per}(E)=H^{n-1}_g(\partial E)$.
The results of Section~4, proved for smooth sets,
extend to sets of finite perimeter by approximation in $BV(M)$. 
\\ \\ 
Let \(E \subset \mathbb{R}^n\) be a set of finite perimeter, and let
\(X \sim \mathrm{Unif}(\Omega_R)\) with
\(\Omega_R = [-R, R]^n\), normalized so that \(|\Omega_R| = 1\);
we later let \(R \to \infty\). Let $G_t(z)=(4\pi t)^{-n/2}e^{-|z|^2/4t}$ and
\[
p_t(x) \;=\; P_t\mathbf 1_E(x) \;=\; (\mathbf 1_E * G_t)(x)
\qquad (0\le p_t\le 1).
\]
Here $*$ denotes the standard Euclidean convolution,
\[
(f * g)(x) := \int_{\mathbb{R}^n} f(x - y)\, g(y)\, dy.
\]
Define the \emph{label entropy functional}
\[
J_t(E) \;=\; H(L\mid Y_t) \;=\; \int_{\Omega} h\!\big(p_t(x)\big)\,dx.
\]
Our goal is to prove, with explicit constants,
\[
J_t(E)\;\ge\;J_t(B)\quad\text{for all }t>0,
\qquad
\Rightarrow\qquad
\mathrm{Per}(E)\;\ge\;\mathrm{Per}(B),
\]
where $B$ is a ball with $|B|=|E|$.
The implication follows by comparing the small-time expansions below.

Let $d(x)$ be the signed distance to $\partial E$ (positive inside).
In a tubular neighborhood of $\partial E$ of reach $r_0$ \cite{Federer1959, federer1969geometric}, write $x=y+\nu(y)\,s$ with
$y\in\partial E$, $s=d(x)$, $\nu$ the outer unit normal, and $dx=(1+ O(s))\,ds\,d\sigma(y)$.
Using 1D convolution along the normal direction and tangential Gaussian factorization,
\[
p_t(x)
= \int_{\mathbb R} \mathbf 1_{\{u>0\}}
\frac{1}{\sqrt{4\pi t}}e^{-(s-u)^2/4t}\,du \;+\; O(\sqrt t)
= \Phi\!\Big(\frac{s}{\sqrt{2t}}\Big) \;+\; O(\sqrt t),
\]
uniformly for $|s| \le t^{1/4}$, where
$\displaystyle \Phi(z)=\frac{1}{\sqrt{2\pi}}\!\int_{-\infty}^{z}\!e^{-u^2/2}\,du$.
Hence, in the boundary layer $|s|\lesssim t^{1/2}$, $p_t$ has the \emph{universal} profile
$\Phi(s/\sqrt{2t})$.

Using the above profile and the fact that away from the layer $p_t-\mathbf 1_E=O(e^{-c/t})$,
\begin{align*}
\|p_t-\mathbf 1_E\|_{L^1}
&=\int_{\partial E}\!\!\int_{\mathbb R}
\big|\Phi(\tfrac{s}{\sqrt{2t}})-\mathbf 1_{\{s>0\}}\big|\,(1+O(s))\,ds\,d\sigma \\
&= \Big(\underbrace{2\!\int_0^\infty\!\!\!\Phi(-u)\,du}_{\,\sqrt{\pi}/\,\sqrt{2}}\Big)\,
\frac{2}{\sqrt{2\pi}}\sqrt t\,\mathrm{Per}(E)+ o(\sqrt t).
\end{align*}
Since $\displaystyle \int_0^\infty \Phi(-u)\,du=\frac{1}{\sqrt{2\pi}}$, we get
\[
\quad \|p_t-\mathbf 1_E\|_{L^1} \;=\; \frac{2}{\sqrt{\pi}}\;\sqrt t\;\mathrm{Per}(E)\;+\;o(\sqrt t).
\]

\subsection{Geometric interpretation}
The diffused label $p_t = P_t\mathbf{1}_E$ may be viewed as a softened indicator of $E$:
it equals approximately $1$ inside $E$, $0$ outside, and transitions smoothly across
a boundary layer of thickness $\sqrt{t}$ around $\partial E$.  
The entropy $H(L\mid Y_t)=\!\int_M h(p_t)\,d\mu$ therefore quantifies the uncertainty
created by this diffusion, while its time derivative
\[
\frac{d}{dt}H(L\mid Y_t) = -\,I_t,
\qquad
I_t = \int_M \frac{|\nabla p_t|^2}{p_t(1-p_t)}\,d\mu,
\]
measures the instantaneous rate at which information is lost across the boundary (\ref{eq:dissipationIdentity}).  

From the boundary–layer profile from~\eqref{thm:finitePer},
\[
I_t \;\sim\; \frac{1}{2}C\,t^{-1/2} H^{n-1}_g(\partial E),
\]
we see that the Fisher–information content of the diffused boundary scales
as $t^{-1/2}$ times the geometric area. Equivalently,
\[
H^{n-1}_g(\partial E)
= \frac{1}{2}C\,\lim_{t\downarrow0}\frac{H(L\mid Y_t)}{\sqrt{t}}
= \frac{1}{2}C\,\lim_{t\downarrow0}t^{1/2}I_t,
\]
so the classical perimeter equals the rate of initial entropy production.

Heat diffusion acts as a noisy channel that gradually erases the binary label $L$.
The uncertainty generated per unit time—the entropy production—is proportional
to the boundary measure: larger interfaces dissipate information faster.
Hence the Euclidean ball, which minimizes perimeter for fixed volume,
also minimizes initial information loss.
In this sense,
\[
\begin{aligned}
\text{minimal surface area}
&\;\Longleftrightarrow\;
\text{slowest entropy growth} \\
&\;\Longleftrightarrow\;
\text{most stable information channel.}
\end{aligned}
\]
This equivalence provides the geometric meaning of
the information–theoretic surface area defined in Corollary~\ref{cor:surfacearea}.

\subsection{Characterization of Perimeter via the Entropy Functional}
\noindent
A central theme of this work is that geometric notions of boundary and area
admit an analytic counterpart through the behavior of entropy under diffusion.
The following result makes this correspondence exact.
It shows that the perimeter of a measurable set can be recovered,
and in fact \emph{characterized}, purely from the small--time behavior
of the entropy functional
\[
H_t(E)=\int_M h(P_t\mathbf{1}_E)\,d\mu,
\]
where we now regard the entropy functional as a function of the set \(E\) for fixed
diffusion time \(t > 0\), and write \(H_t(E)\) instead of \(H_E(t)\).
Then, finite perimeter is not merely compatible with the
$\sqrt{t}$--scaling of $H_t(E)$, but is precisely the condition
that ensures such scaling, and the limiting coefficient equals the
geometric surface area.
This characterization identifies the entropy functional as an
\emph{intrinsic analytic representation of boundary measure},
linking the theory of sets of finite perimeter with the information geometry
of the heat semigroup.

The following result identifies the geometric perimeter as the unique finite limit of the renormalized entropy functional as $t\downarrow0$,
thus providing an analytic characterization of surface area via information
dissipation under heat flow.
\begin{theorem}[Characterization of perimeter]
Let $(M,g)$ be a smooth compact Riemannian manifold with probability measure
$\mu$. For any measurable set $E\subset M$, the following are equivalent:
\begin{enumerate}
\item[\textup{(i)}] $E$ has finite perimeter in the sense of De~Giorgi \cite{DeGiorgi1954, Maggi2012};
\item[\textup{(ii)}] $H_t(E)=O(\sqrt{t})$ as $t\downarrow0$,
and the renormalized limit
\[
\mathrm{Per}_{\mathrm{ent}}(E)
:= \lim_{t\downarrow0}\frac{H_t(E)}{C\,\sqrt{t}}
\]
exists and is finite.
\end{enumerate}
In this case, $\mathrm{Per}_{\mathrm{ent}}(E)=\mathrm{Per}(E)$.
\end{theorem}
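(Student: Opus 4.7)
I would prove the two directions separately and read off the equality of perimeters from the limit formula established in the process. For (i)~$\Rightarrow$~(ii), I would extend the boundary-layer expansion of Corollary~\ref{cor:entropy-expansion} from smooth to finite-perimeter sets by the BV-approximation scheme already indicated in Section~4.3 for~\eqref{thm:finitePer}: take smooth sets $E_k$ with $\mathbf 1_{E_k}\to\mathbf 1_E$ in $L^1(M)$ and $\mathrm{Per}(E_k)\to\mathrm{Per}(E)$ (\cite[Thm.~3.42]{AmbrosioFuscoPallara2000}). For each fixed $t>0$, the $L^1$-continuity of $E\mapsto H_t(E)$ follows from the regularizing effect of $P_t$ together with the boundedness and continuity of $h$ on $[0,1]$, so $H_t(E_k)\to H_t(E)$. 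A diagonalization in $k$ and $t$ then propagates the smooth asymptotic $H_t(E_k)=C\sqrt t\,\mathrm{Per}(E_k)+o_k(\sqrt t)$ to $H_t(E)=C\sqrt t\,\mathrm{Per}(E)+o(\sqrt t)$, giving both the $O(\sqrt t)$ bound and identifying the limit as $\mathrm{Per}(E)$.

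The substantive step is (ii)~$\Rightarrow$~(i). Set $p_t:=P_t\mathbf 1_E$. The elementary inequality $h(u)\ge 2u(1-u)$ on $[0,1]$ --- which holds because $h(u)-2u(1-u)$ is concave and vanishes at both endpoints --- combined with the dissipation identity~\eqref{eq:dissipationIdentity} and the Cauchy--Schwarz inequality, gives
\[
\|\nabla p_t\|_{L^1(M)}^{\,2}
\;\le\;
\Bigl(\int_M \tfrac{|\nabla p_t|^2}{p_t(1-p_t)}\,d\mu\Bigr)
\Bigl(\int_M p_t(1-p_t)\,d\mu\Bigr)
\;\le\;\tfrac{1}{2}\,I_t\,H_t(E).
\]
Since $H_0(E)=0$, integrating \eqref{eq:dissipationIdentity} yields $H_t(E)=\int_0^t I_\tau\,d\tau$, so the hypothesis $H_t(E)\le K\sqrt t$ together with the mean value theorem on $[t/2,t]$ produces $\tau_t\in[t/2,t]$ with $I_{\tau_t}\le 2K/\sqrt t$. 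Substituting back gives $\|\nabla p_{\tau_t}\|_{L^1}\le K$. Picking $t_k\downarrow 0$ produces $\tau_k\downarrow 0$ with $\|\nabla p_{\tau_k}\|_{L^1}\le K$ uniformly, and the lower semicontinuity of total variation under the $L^1$-convergence $p_{\tau_k}\to\mathbf 1_E$ yields $\mathrm{Per}(E)\le K<\infty$. The equality $\mathrm{Per}_{\mathrm{ent}}(E)=\mathrm{Per}(E)$ is then immediate by applying the forward direction to $E$.

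The main obstacle is the reverse direction --- specifically, converting the integrated bound $H_t=O(\sqrt t)$ into a \emph{pointwise-in-time} control of the Fisher-like quantity $I_t$ via mean-value extraction of a favorable time slice. The quadratic comparison $h(u)\ge 2u(1-u)$ is the analytic linchpin, as it is what allows Cauchy--Schwarz to bridge the information-theoretic functional $H_t$ with the geometric BV seminorm $\|\nabla p_t\|_{L^1}$. A secondary but nontrivial point is ensuring uniform control of the $o(\sqrt t)$ remainder as smooth sets approximate $E$ in the forward direction; an alternative route is to adapt the Fermi-coordinate expansion of Corollary~\ref{cor:entropy-expansion} directly on the reduced boundary $\partial^*E$, exploiting its rectifiable structure in the spirit of Miranda--Pallara--Paronetto--Preunkert.
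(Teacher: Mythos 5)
Your (i)~$\Rightarrow$~(ii) mirrors the paper's $BV$-approximation scheme and shares its weakness: propagating $H_t(E_k)=C\sqrt t\,\mathrm{Per}(E_k)+o_k(\sqrt t)$ to the limit $E$ requires the error $o_k(\sqrt t)$ to be controllable uniformly along the approximating sequence, which neither argument fully pins down; you note this yourself. Your (ii)~$\Rightarrow$~(i), by contrast, is a genuinely different and, in my reading, superior argument. The paper's version re-runs the Fermi-coordinate boundary-layer decomposition of Section~4.1, which presupposes the smooth (or rectifiable) boundary structure one is trying to prove exists, making it effectively circular for an arbitrary measurable set. Your chain --- the elementary pointwise bound $h(u)\ge2u(1-u)$ (concave difference vanishing at endpoints), Cauchy--Schwarz giving $\|\nabla p_t\|_{L^1(M)}^2\le\tfrac12\,I_t\,H_t(E)$, mean-value extraction of $\tau_t\in[t/2,t]$ with $I_{\tau_t}\le 2K/\sqrt t$ from the integrated bound $H_t(E)=\int_0^t I_\tau\,d\tau\le K\sqrt t$, and lower semicontinuity of total variation along $p_{\tau_k}\to\mathbf 1_E$ in $L^1$ --- needs no a priori geometry of $\partial E$ and applies to any measurable $E$, which is exactly what is required to close the equivalence. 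This is in the spirit of the Ledoux and Miranda--Preunkert heat-content characterizations of $BV$, but with the entropy functional replacing heat content. Two small points are worth recording. First, your argument uses $\tfrac{d}{dt}H_t(E)=+I_t$ with $I_t=\int_M |\nabla p_t|^2/(p_t(1-p_t))\,d\mu\ge0$; Proposition~\ref{prop:inf-diss-id} as printed carries a spurious minus sign in its last line, since $h''(u)=-1/(u(1-u))<0$ makes the final equality flip sign and entropy in fact \emph{increases} under heat flow, so your integration implicitly corrects the paper. Second, $H_0(E)=0$ should be read as $\lim_{t\downarrow0}H_t(E)=0$, which holds by bounded convergence because $h$ is bounded and continuous and $p_t\to\mathbf 1_E$ in measure.
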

\begin{proof}
\smallskip\noindent
\emph{(i) $\Rightarrow$ (ii).}
If $E$ has finite perimeter, the boundary–layer analysis in §4.1 yields the
$L^1$–jump asymptotic and, by a first–order Taylor expansion of $h$ at $\{0,1\}$,
the entropy expansion
\[
H_t(E)\;=\;C\,\sqrt{t}\,\mathrm{Per}(E)+o(\sqrt{t}) \qquad (t\downarrow0),
\]
see the derivation leading to \((4)\). Hence $H_t(E)=O(\sqrt{t})$ and
\(
\displaystyle \lim_{t\downarrow0}\frac{H_t(E)}{C\sqrt{t}}=\mathrm{Per}(E).
\)

\smallskip\noindent
\emph{(ii) $\Rightarrow$ (i).}
Assume $H_t(E)=O(\sqrt{t})$ as $t\downarrow0$.
In the tubular neighborhood of $\partial E$ used in §4.1, the proof of \((4)\) shows
that the only $O(\sqrt{t})$ contribution to $H_t(E)$ comes from the boundary layer
$|s|\lesssim \sqrt{t}$ and is proportional to the surface measure; more precisely,
for any $R_t\to\infty$ with $R_t\sqrt{t}\to\infty$,
\[
H_t(E)
=\int_{\partial E}\!\int_{|s|\le R_t\sqrt{t}}
h\!\Big(\Phi\!\big(\tfrac{s}{\sqrt{2t}}\big)\Big)\,(1+O(|s|))\,ds\,d\sigma
\;+\;o(\sqrt{t}),
\]
cf. the truncation and symmetry cancellations used before \((4)\).
Thus $H_t(E)=O(\sqrt{t})$ forces a uniform bound on the boundary-layer integral,
and the argument that upgrades the $L^1$–jump estimate to \((1)\) applies verbatim
to yield
\[
\|P_t\mathbf{1}_E-\mathbf{1}_E\|_{L^1(M)}=O(\sqrt{t}) \qquad (t\downarrow0).
\]
By the heat–content characterization (Theorem \ref{thm:heatL1}) and BV–approximation, this implies that $E$ has finite De~Giorgi perimeter and
\[
\lim_{t\downarrow0}\frac{\|P_t\mathbf{1}_E-\mathbf{1}_E\|_{L^1}}{\sqrt{t}}
=\frac{2}{\sqrt{\pi}}\;\mathrm{Per}(E).
\]
Returning to the entropy layer computation (the step producing \((4)\)),
we conclude that the renormalized entropy limit exists and equals the geometric perimeter:
\[
\mathrm{Per}_{\mathrm{ent}}(E)
=\lim_{t\downarrow0}\frac{H_t(E)}{C\sqrt{t}}
=\mathrm{Per}(E).
\]
This proves (i) and the identification of the limit in (ii).
\end{proof}

\section{Isoperimetric Inequalities via Information Decay}
\label{sec:euclidean}
We now derive the classical and quantitative isoperimetric inequalities
from the information--theoretic formulation developed above.
The key principle is that the rate of mutual--information loss under
heat diffusion controls the boundary measure of a set.

\subsection{Main theorem in the Euclidean setting}

We first need to establish a rearrangement step. Let $E^*$ be the centered ball with $|E^*|=|E|$.
By Riesz–Sobolev/Hardy–Littlewood \cite{LiebLoss2022} and the fact that $G_t$ is radially decreasing,
\[
(\mathbf 1_E*G_t)^*=\mathbf 1_{E^*}*G_t, \qquad
\text{and}\qquad \mathbf 1_E*G_t \prec \mathbf 1_{E^*}*G_t \;\;(\text{submajorization}).
\]
Since $h$ is concave on $[0,1]$, Karamata’s inequality \cite{Karamata1932, cvetkovski2012karamata} yields
\[
\quad J_t(E)=\int h(\mathbf 1_E*G_t)\,dx
\;\;\ge\;\; \int h(\mathbf 1_{E^*}*G_t)\,dx=J_t(E^*)
\qquad \text{for all }t>0. \quad
\]
Hence $J_t(E)-J_t(E^*)\ge 0$ for every $t>0$.

\begin{lemma}[Entropy comparison under Gaussian averaging]
\label{lem:averaging}
Let $E\subset\mathbb{R}^n$ be measurable with $|E|=|B|$, where $B$ is a Euclidean ball. 
For $t>0$ let 
\[
G_t(x)=(4\pi t)^{-n/2}e^{-|x|^2/4t}
\]
and $P_t f=f*G_t$. 
Then
\[
\int_{\mathbb{R}^n} h\!\big(P_t 1_E(x)\big)\,dx \;\ge\; \int_{\mathbb{R}^n} h\!\big(P_t 1_B(x)\big)\,dx
\qquad\text{for all }t>0.
\]
\end{lemma}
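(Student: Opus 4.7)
The plan is to prove this by symmetrization, making rigorous the rearrangement argument sketched in the paragraph preceding the lemma. The key idea is that $\mathbf{1}_B$ is the most concentrated indicator at the given volume, so diffusing it by $G_t$ produces the most concentrated smoothed indicator, and since $h$ is concave, spreading the values of such a function can only increase $\int h(\cdot)\,dx$.

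First I would observe that $P_t \mathbf{1}_B = \mathbf{1}_B * G_t$ is radially symmetric and decreasing, being the convolution of two such functions, and hence coincides with its own symmetric decreasing rearrangement. Next I would establish the majorization $(P_t \mathbf{1}_E)^{**}(s) \le (P_t \mathbf{1}_B)^{**}(s)$ for every $s > 0$, where $f^{**}(s) := \int_0^s f^*(u)\,du$ and $f^*$ denotes the decreasing rearrangement. Equality at $s = \infty$ holds since $\int P_t \mathbf{1}_E = |E| = |B| = \int P_t \mathbf{1}_B$ (convolution with $G_t$ preserves the $L^1$ norm). For finite $s$, choose $A \subset \mathbb{R}^n$ to be a super-level set of $P_t \mathbf{1}_E$ of measure $s$ and apply the Riesz rearrangement inequality:
\[
\int_A P_t \mathbf{1}_E\,dx
= \iint \mathbf{1}_E(y)\,\mathbf{1}_A(x)\,G_t(x-y)\,dx\,dy
\le \iint \mathbf{1}_B(y)\,\mathbf{1}_{A^*}(x)\,G_t(x-y)\,dx\,dy
= \int_{A^*} P_t \mathbf{1}_B\,dx.
\]
Since $A^*$ is a centered ball of volume $s$ and $P_t \mathbf{1}_B$ is radially symmetric decreasing, the right-hand side equals $(P_t \mathbf{1}_B)^{**}(s)$, while the left-hand side is $(P_t \mathbf{1}_E)^{**}(s)$ by the choice of $A$.

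With the majorization in hand, I would apply the Hardy–Littlewood–Pólya/Karamata principle in its continuous form: for nonnegative integrable $f, g$ with $f \prec g$, $\int f = \int g$, and $\Phi$ continuous and concave on a common range, one has $\int \Phi(f)\,dx \ge \int \Phi(g)\,dx$. Since both $P_t \mathbf{1}_E$ and $P_t \mathbf{1}_B$ take values in $[0,1]$ and $h$ is continuous and concave there, this yields the lemma. The main subtlety I expect is the HLP step for a concave \emph{non-monotone} function on an unbounded domain; the clean route is to exploit equimeasurability $\int h(P_t \mathbf{1}_E)\,dx = \int h\bigl((P_t \mathbf{1}_E)^*\bigr)\,dx$ to reduce to comparing two radially symmetric decreasing functions of equal $L^1$ norm, and then integrate by parts in the radial variable against the distribution function using the majorization above. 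Integrability is never a concern: both functions are bounded by $1$ and decay rapidly as $|x| \to \infty$, so $h(P_t \mathbf{1}_E)$ and $h(P_t \mathbf{1}_B)$ are integrable.
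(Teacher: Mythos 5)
Your argument is essentially the paper's: it proceeds via symmetric decreasing rearrangement, the Riesz rearrangement inequality giving submajorization $P_t\mathbf{1}_E\prec P_t\mathbf{1}_B$, and then the Hardy--Littlewood--P\'olya/Karamata principle for the concave function $h$. Your version is in fact a bit cleaner than the paper's: the paper's first display asserts the equality $(\mathbf 1_E*G_t)^{\ast}=\mathbf 1_B*G_t$, which is false in general (only the submajorization holds), whereas you correctly derive the majorization $(P_t\mathbf{1}_E)^{**}(s)\le(P_t\mathbf{1}_B)^{**}(s)$ directly from Riesz without asserting that spurious equality, and you explicitly address the equal-mass condition and the integrability of $h\circ P_t\mathbf{1}_E$ on the unbounded domain, both of which the paper leaves implicit.
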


\begin{proof}
Write $f^\ast$ for the symmetric decreasing rearrangement \cite{LiebLoss2022} of $f$.
Since $G_t$ is radial, positive and radially decreasing, classical rearrangement theory yields
\[
(P_t 1_E)^\ast = (1_E * G_t)^\ast = 1_B * G_t = P_t 1_B,
\]
and, moreover, the submajorization
\[
1_E * G_t \;\prec\; 1_B * G_t,
\]
meaning $\int_0^s (1_E * G_t)^\downarrow \, d\lambda \le \int_0^s (1_B * G_t)^\downarrow \, d\lambda$ for every $s>0$, \cite{LiebLoss2022}.
This is the standard Hardy--Littlewood--P\'olya submajorization for convolution with a radially decreasing kernel and implies that $P_t1_E$ is submajorized by $P_t1_B$, i.e. $\int_0^s (P_t\mathbf{1}_E)^{\downarrow}\,d\lambda
\le
\int_0^s (P_t\mathbf{1}_B)^{\downarrow}\,d\lambda.$

Since $h:[0,1]\to\mathbb{R}$ is concave, Karamata's inequality (or the integral form of Karamata for functions on measure spaces) gives
\[
\int_{\mathbb{R}^n} h\!\big(P_t 1_E(x)\big)\,dx 
\;\ge\;
\int_{\mathbb{R}^n} h\!\big(P_t 1_B(x)\big)\,dx,
\]
whenever $P_t1_E \prec P_t1_B$ (submajorization).

Combining the two steps proves the claim:
\[
H_E(t) \;=\; \int_{\mathbb{R}^n} h\!\big(P_t1_E\big)\,dx
\;\ge\;
\int_{\mathbb{R}^n} h\!\big(P_t1_B\big)\,dx \;=\; H_B(t),\qquad t>0.
\]
\end{proof}

\begin{theorem}[Information--theoretic isoperimetric inequality]
\label{thm:main}
Let $E\subset\mathbb R^n$ be a measurable set of finite perimeter and
let $L=\mathbf 1_E(X)$, $Y_t\sim K_t(X,\cdot)$ with $X$ uniformly
distributed on a bounded Euclidean domain of unit volume. Then as $t\downarrow0$,
\[
I(L;Y_t)\;\le\;
H(L)-C\,\sqrt{t}\,
  H^{n-1}(\partial E)+o(\sqrt t).
\]
Consequently,
\begin{equation}\label{eq:iso-euclidean}
H^{n-1}(\partial E)
  \ge n\,\omega_n^{1/n}\,
  \min\{\mu(E),1-\mu(E)\}^{\frac{n-1}{n}},
\end{equation}
with equality if and only if $E$ is a ball.
\end{theorem}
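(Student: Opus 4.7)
The plan is to reap the isoperimetric inequality as an immediate consequence of two ingredients already at our disposal: the entropy--perimeter asymptotic (Corollary~\ref{cor:entropy-expansion}, extended to sets of finite perimeter as in Section~4.4) and the Gaussian rearrangement comparison (Lemma~\ref{lem:averaging}). The structure is a ``for every $t>0$'' monotonicity comparison, followed by a single limit as $t\downarrow 0$.

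The first assertion, the mutual-information bound, is immediate. By Corollary~\ref{cor:entropy-expansion},
$$H(L\mid Y_t) \;=\; C\sqrt{t}\,H^{n-1}(\partial E) + o(\sqrt{t}), \qquad t\downarrow 0,$$
and substituting into the identity $I(L;Y_t) = H(L) - H(L\mid Y_t)$ yields the displayed estimate as an equality, hence in particular the stated $\le$. No further work is needed here.

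For the isoperimetric inequality proper, I would set $v := \mu(E)$ and, using $\mathrm{Per}(E)=\mathrm{Per}(E^c)$, assume without loss of generality that $v = \min\{\mu(E),1-\mu(E)\}\le 1/2$ (replacing $E$ by $E^c$ otherwise). Let $B$ denote a Euclidean ball of volume $v$. Lemma~\ref{lem:averaging} yields the pointwise-in-$t$ comparison $H_E(t)\ge H_B(t)$ for every $t>0$. Applying the entropy--perimeter expansion on both sides gives
$$C\sqrt{t}\,\mathrm{Per}(E) + o(\sqrt{t}) \;\ge\; C\sqrt{t}\,\mathrm{Per}(B) + o(\sqrt{t}),$$
and dividing by $C\sqrt{t}$ and sending $t\downarrow 0$ produces
$$\mathrm{Per}(E)\;\ge\;\mathrm{Per}(B)\;=\;n\,\omega_n^{1/n}\,v^{(n-1)/n},$$
which is \eqref{eq:iso-euclidean}.

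For the rigidity half (equality iff ball), I would trace back the chain of inequalities in Lemma~\ref{lem:averaging}. The identity $\mathrm{Per}(E)=\mathrm{Per}(B)$ forces $H_E(t) - H_B(t) = o(\sqrt{t})$, i.e.\ asymptotic equality in the Karamata/submajorization step for small $t>0$; strict concavity of $h$ on $(0,1)$ then forces $P_t\mathbf{1}_E$ to coincide a.e.\ with its symmetric decreasing rearrangement $P_t\mathbf{1}_B$ for every $t>0$. Passing $t\downarrow 0$ in $L^1(\mathbb{R}^n)$ recovers $\mathbf{1}_E=\mathbf{1}_B$ a.e., so $E$ is a ball up to translation and a null set. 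The main obstacle will be making this rigidity step genuinely rigorous: strict equality in the Hardy--Littlewood--P\'olya submajorization under convolution with a strictly radially decreasing kernel must be handled carefully (for instance via the rigidity of Riesz--Sobolev, as in Lieb--Loss), since asymptotic equality only at leading order in $t$ is a priori weaker than pointwise equality. A clean alternative is to invoke the classical uniqueness of the Euclidean isoperimetric extremizer as a black box once the bound $\mathrm{Per}(E)\ge n\omega_n^{1/n}v^{(n-1)/n}$ has been established, thereby quarantining the subtle rearrangement rigidity behind a well-known geometric measure theory fact.
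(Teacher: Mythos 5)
Your proposal follows the paper's own route: the rearrangement comparison of Lemma~\ref{lem:averaging} gives $H_E(t)\ge H_B(t)$ for every $t>0$, the $C\sqrt t$-expansion is applied to both sides, and dividing by $C\sqrt t$ and letting $t\downarrow0$ yields $\mathrm{Per}(E)\ge\mathrm{Per}(B)$, after which the mutual-information estimate is just $I(L;Y_t)=H(L)-H(L\mid Y_t)$ combined with Corollary~\ref{cor:entropy-expansion}.

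Two points where your version is actually tighter than the paper's. First, you pass to the complement (using $\mathrm{Per}(E)=\mathrm{Per}(E^c)$) to realize the $\min\{\mu(E),1-\mu(E)\}$ in \eqref{eq:iso-euclidean}; the paper writes $\mathrm{Per}(E^\ast)=n\omega_n^{1/n}v^{(n-1)/n}$ and leaves that reduction implicit. Second, your caution about the rigidity step identifies a genuine gap in the paper's own argument: the paper asserts that ``equality for some $t>0$ forces $\mathbf 1_E\ast G_t$ to be radially decreasing,'' but the hypothesis available after taking the limit is only $\mathrm{Per}(E)=\mathrm{Per}(E^\ast)$, i.e.\ equality of the leading $\sqrt t$-coefficients, which does not by itself give $H_E(t)=H_{E^\ast}(t)$ at any fixed $t>0$. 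Upgrading the former to the latter requires a separate argument. Your proposed repair—invoke the classical uniqueness of the Euclidean isoperimetric extremizer from geometric measure theory once the perimeter bound $\mathrm{Per}(E)\ge n\omega_n^{1/n}v^{(n-1)/n}$ is established—is a legitimate and clean fix that sidesteps the subtle submajorization-rigidity step entirely; the Lieb--Loss strict-rearrangement route would also work but is more delicate to execute from the asymptotic hypothesis alone.
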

\begin{proof}
Let $v:=\mu(E)$ and let $E^\ast$ be a (centered) ball with $\mu(E^\ast)=v$.
Write
\[
J_t(E):=H(L\mid Y_t)=\int_M h(P_t\mathbf 1_E)\,d\mu.
\]

In $\mathbb{R}^n$, with the Gaussian kernel $G_t(z)=(4\pi t)^{-n/2} e^{-|z|^2/4t}$,
\[
P_t\mathbf 1_E=\mathbf 1_E\ast G_t,\quad
(\mathbf 1_E\ast G_t)^\ast=\mathbf 1_{E^\ast}\ast G_t,\quad
\mathbf 1_E\ast G_t \prec \mathbf 1_{E^\ast}\ast G_t.
\]
Here $*$ denotes the standard Euclidean convolution,
\[
(f * g)(x) := \int_{\mathbb{R}^n} f(x - y)\, g(y)\, dy,
\]
so that $P_t f = f * G_t$ with $G_t(z) = (4\pi t)^{-n/2} e^{-|z|^2/4t}$ is the usual heat semigroup acting by Gaussian averaging.

Since $h$ is concave on $[0,1]$, Lemma \ref{lem:averaging} yields
\[
J_t(E)=\int h(\mathbf 1_E\ast G_t)\,dx \;\ge\; \int h(\mathbf 1_{E^\ast}\ast G_t)\,dx
=J_t(E^\ast)\quad\text{for all }t>0,
\]
hence 
\[
I(L;Y_t)=H(L)-J_t(E)\;\le\;H(L)-J_t(E^\ast)=I(L^\ast;Y_t).
\]

Then, as $t\downarrow0$,
\[
J_t(E)=C \sqrt{t}\,\mathrm{Per}(E)+o(\sqrt{t}),\qquad
J_t(E^\ast)=C \sqrt{t}\,\mathrm{Per}(E^\ast)+o(\sqrt{t}).
\]

We have $J_t(E)\ge J_t(E^\ast)$ for all $t>0$. Divide by $\sqrt{t}$ and
send $t\downarrow0$ to get:
\[
C\big(\mathrm{Per}(E)-\mathrm{Per}(E^\ast)\big) \;\ge\; 0
\quad\Longrightarrow\quad
\mathrm{Per}(E)\;\ge\;\mathrm{Per}(E^\ast).
\]
As $\mathrm{Per}(E^\ast)=n\,\omega_n^{1/n}\,v^{(n-1)/n}$, this gives the sharp Euclidean
isoperimetric bound. Moreover, equality for some $t>0$ forces
$\mathbf 1_E\ast G_t$ to be radially decreasing; hence $E$ is a translate of a ball
(rigidity).

Using the small-time expansion $H(L\mid Y_t)=C\sqrt{t}\,H^{n-1}(\partial E)+o(\sqrt{t})$, we have
\[
I(L;Y_t)=H(L)-H(L\mid Y_t)
= H(L)-C\sqrt{t}\,H^{n-1}(\partial E)+o(\sqrt{t}),
\]
which is the stated form. 
\end{proof}

\section{Information--Decay Isoperimetry on Compact Manifolds via Localization}
\label{sec:localization}

We now extend the information–decay formulation of isoperimetry from
$\mathbb{R}^n$ to smooth compact Riemannian manifolds under curvature–dimension
bounds.  The Euclidean rearrangement argument used earlier is replaced by the
\emph{localization} or \emph{needle decomposition} method \cite{Klartag2017, CavallettiMondino2017}, which reduces the
multi–dimensional problem to a one–dimensional analysis along geodesic segments
satisfying the $\mathrm{CD}(K,1)$ condition.  This approach yields a sharp
entropy comparison valid for all times~$t>0$ and immediately recovers the
Lévy–Gromov isoperimetric inequality \cite{Levy1951, GromovIsoPer} and its stability.

\subsection{Setting and notation}

Let $(M,g)$ be a smooth, compact, boundaryless Riemannian manifold with
normalized volume measure $\mu$.  
Assume the curvature–dimension condition $\mathrm{CD}(K,n)$ in the sense of
Bakry–Émery (see Section \ref{sec:heatsemigroupdiffusion}):
\[
\mathrm{Ric}_g \ge K g .
\]
The constant–curvature model space $\mathbb{M}^n_K$ is defined as
\[
\mathbb{M}^n_K =
\begin{cases}
\mathbb{S}^n_{1/\sqrt{K}}, & K>0, \\[0.4em]
\mathbb{R}^n, & K=0, \\[0.4em]
\mathbb{H}^n_{1/\sqrt{|K|}}, & K<0,
\end{cases}
\]
i.e. the simply connected $n$–manifold of constant sectional curvature $K$. We denote by
$H_{\mathrm{mod}}(v,t;K,n)$ the same conditional entropy computed for a
spherical cap (volume fraction~$v$) under its model heat semigroup
\begin{align*}
H_{\mathrm{mod}}(v,t;K,n)
&:= \int_{\mathbb{M}^n_K} 
    h\!\left(P^{(K,n)}_t \mathbf{1}_{B_{v,K,n}}\right) d\mu_{K,n} \\
 &= \int_{0}^{\infty}
    h\!\left(
    \frac{
      \int_{0}^{r_v} p^{(K,n)}_t(r,s)\, s_{K}^{n-1}(s)\,ds
      }
      {
      \int_{0}^{\infty} p^{(K,n)}_t(r,s)\, s_{K}^{n-1}(s)\,ds
      }
    \right)
    s_{K}^{n-1}(r)\,dr,
\end{align*}
where $B_{v,K,n}$ is the geodesic ball in $\mathbb{M}^n_K$ with volume fraction $v=\mu_{K,n}(B_{v,K,n})$,
and $p^{(K,n)}_t(r,s)$ is the radial heat kernel on $\mathbb{M}^n_K$.

\subsection{Localization and one–dimensional disintegration}\label{sec:LocalizatinDisintegration}

A fundamental result of Klartag and Cavalletti–Mondino \cite{Klartag2017,CavallettiMondino2017} asserts that on a $\mathrm{CD}(K,n)$
space there exists a measurable family of one-dimensional geodesics
\[
(\gamma_\alpha,\mu_\alpha)_{\alpha\in A}
\]
called \emph{needles}, such that each
$\gamma_\alpha:I_\alpha\to M$ is a unit-speed minimizing geodesic
on an interval $I_\alpha\subset\mathbb{R}$, and $\mu_\alpha$ is a probability
measure supported on $\gamma_\alpha$, absolutely continuous with
respect to arc-length:
\[
d\mu_\alpha(t) = \rho_\alpha(t)\,dt,
\qquad
\rho_\alpha: I_\alpha \to[0,\infty),
\]
where $\rho_\alpha$ is the density of $\mu_\alpha$.
The ambient measure $\mu$ of $M$ then decomposes as
\[
\mu = \int_A \mu_\alpha\,d\pi(\alpha),
\]
where $\pi$ is a measure on the index set $A$.
The heat semigroup and all $L^1$ quantities disintegrate consistently:
for every bounded measurable $\varphi,f$,
\[
\int_M \varphi\,P_t f\,d\mu
  = \int_A \left(\int_{\gamma_\alpha} \varphi\,P^{(\alpha)}_t f\,d\mu_\alpha
      \right)d\pi(\alpha),
\]
where $P^{(\alpha)}_t$ is the $1$-D heat semigroup along~$\gamma_\alpha$,
\[
P^{(\alpha)}_t f(s)
= \int_{I_\alpha} f(r)\,K^{(\alpha)}_t(s,r)\,d\mu_\alpha(r),
\]
with $K^{(\alpha)}_t(s,r)$ the $1$–D heat kernel on $(\gamma_\alpha,\mu_\alpha)$.

Set $p_t^{(\alpha)}=P^{(\alpha)}_t \mathbf{1}_E$ and
\[
H_\alpha(t)=\int_{\gamma_\alpha} h(p_t^{(\alpha)}(s))\,d\mu_\alpha(s).
\]
By disintegration and concavity of~$h$,
\begin{equation}\label{eq:H-disintegration}
H(L|Y_t)=\int_A H_\alpha(t)\,d\pi(\alpha).
\end{equation}
Indeed $p_t:=P_t\mathbf{1}_E$ and taking $f=\mathbf{1}_E$ and using that the above disintegration holds for every bounded measurable $\varphi$, we get the
$\mu_\alpha$–a.e. identity on each needle
\[
p_t\big|_{\gamma_\alpha}=P^{(\alpha)}_t\mathbf{1}_E.
\]
Hence, 
\[
H(L\mid Y_t)=\int_M h(p_t)\,d\mu
=\int_A\!\left(\int_{\gamma_\alpha} h\!\left(p^{(\alpha)}_t\right)\,d\mu_\alpha\right)\!d\pi(\alpha)
=\int_A H_\alpha(t)\,d\pi(\alpha).
\]

\subsection{One–dimensional minimizers}\label{sec:onedimminimizers}
Fix $\alpha$. For a measurable set $E \subset M$ with $\mu(E)=v$, define 
$v_\alpha := \mu_\alpha(E)$ as its measure on the needle 
$(\gamma_\alpha, \mu_\alpha)$.  Consider all measurable 
$F \subset \gamma_\alpha$ satisfying $\mu_\alpha(F)=v_\alpha$.
For each $t>0$ define
\[
H_\alpha(F;t)
  = \int_{\gamma_\alpha} h(P^{(\alpha)}_t \mathbf{1}_F)\,d\mu_\alpha.
\]

\begin{lemma}[One–dimensional minimizer]\label{lem:1d-minimizer}
For every~$t>0$ and fixed~$v_\alpha\in(0,1)$,
the functional $F\mapsto H_\alpha(F;t)$ is minimized by an interval
$I_\alpha$ (possibly truncated by the endpoints of~$\gamma_\alpha$).
Moreover, the minimal value depends only on $v_\alpha$, $t$, and~$K$.
\end{lemma}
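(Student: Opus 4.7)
The strategy is to establish the lemma via a two-stage reduction: a one-dimensional monotone rearrangement of $F$ onto an interval within $(\gamma_\alpha,\mu_\alpha)$, followed by a CD$(K,n)$ heat-kernel comparison with the canonical 1D model needle to extract the universal value.

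First I would obtain existence of a minimizer from weak-$*$ compactness of $\{\mathbf{1}_F\in L^\infty(\mu_\alpha):\mu_\alpha(F)=v_\alpha\}$ together with continuity of $F\mapsto H_\alpha(F;t)$ in that topology, using that $P^{(\alpha)}_t$ smooths $L^\infty$ into continuous functions for $t>0$ and that $h$ is bounded and continuous on $[0,1]$. To show that any minimizer is an interval, I would introduce the monotone rearrangement $F^\#\subset I_\alpha$ of $F$ with respect to $\mu_\alpha$: the interval of $\mu_\alpha$-measure $v_\alpha$ abutting one endpoint of $\gamma_\alpha$. The key analytic ingredient is that the 1D heat kernel $K^{(\alpha)}_t(s,r)$ on the weighted needle is totally positive of order 2 (a P\'olya frequency kernel), a consequence of the log-concavity-type structure of the weighted heat equation guaranteed by the CD assumption on needles. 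This TP$_2$ property implies the Hardy--Littlewood--P\'olya submajorization $P^{(\alpha)}_t\mathbf{1}_F\prec P^{(\alpha)}_t\mathbf{1}_{F^\#}$; combined with the concavity of $h$ and Karamata's inequality, this yields
\[
H_\alpha(F^\#;t)\;\le\;H_\alpha(F;t),
\]
with equality only if $F=F^\#$ up to a reflection. Hence the minimizer must be an interval, possibly truncated by the endpoints of $\gamma_\alpha$.

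For the universal dependence of the minimum on $(v_\alpha,t,K)$ alone (with $n$ fixed by the ambient manifold $(M,g)$), I would compare the needle $(\gamma_\alpha,\mu_\alpha)$ with the canonical 1D model of constant curvature $K$, whose density is $s_K^{n-1}$, with $s_K$ the sine/sinh function determined by the sign of $K$. The CD$(K,n)$ condition constrains $\rho_\alpha$ to be $(K,n)$-concave, and standard heat-kernel comparison results (in the spirit of Cheeger--Yau, or via the 1D Bakry--\'Emery $\Gamma_2$ calculus) allow one to reduce the interval entropy on $(\gamma_\alpha,\mu_\alpha)$ to the corresponding quantity on the model needle. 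The model value manifestly depends only on $(v_\alpha,t,K)$, proving the second assertion.

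The main obstacle will be a rigorous verification of the TP$_2$ / P\'olya frequency property for the weighted 1D heat kernel on a general CD$(K,n)$ needle: classical P\'olya--Schoenberg theorems apply to translation-invariant convolution kernels on $\mathbb{R}$, whereas $K^{(\alpha)}_t(s,r)$ depends nontrivially on the density $\rho_\alpha$ and is not translation-invariant. The intrinsic route is to use the variation-diminishing property of 1D diffusions --- the number of sign changes of a solution to the weighted 1D heat equation is nonincreasing in time, by the strong maximum principle --- to derive the submajorization directly. Adapting this argument to the boundary behavior on $\gamma_\alpha$ and to the non-trivial density $\rho_\alpha$ is the most delicate technical step of the proof.
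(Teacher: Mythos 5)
Your proposal follows the same overall route as the paper — monotone rearrangement of $F$ onto an interval, combined with concavity of $h$ and a majorization/Karamata step — so it is not a genuinely different strategy. However, you go substantially further than the paper's proof, and in doing so you expose a real gap that the paper's terse argument glosses over. The paper asserts the kernel is log-concave and that $P^{(\alpha)}_t\mathbf{1}_F$ is ``minimized pointwise'' by the interval rearrangement, then cites standard 1-D Riesz/Hardy--Littlewood; but pointwise minimization is impossible when $\int P^{(\alpha)}_t\mathbf{1}_F\,d\mu_\alpha=v_\alpha$ is fixed, and the Riesz/Hardy--Littlewood rearrangement inequalities in their classical form rely on translation invariance of the kernel. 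Your identification of the totally-positive-of-order-2 (P\'olya frequency) property of $K^{(\alpha)}_t(s,r)$ as the correct replacement, and your proposal to establish it intrinsically via the variation-diminishing property of one-dimensional diffusions (sign-change monotonicity through the strong maximum principle), is exactly what is needed to make the weighted, non-translation-invariant case rigorous. Your use of submajorization plus Karamata, rather than pointwise domination, is also the correct formulation of the comparison.

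Two caveats. First, you should be explicit about the direction of the (sub)majorization: for concave $h$ and fixed total mass, Karamata gives $\int h(f)\ge\int h(g)$ when $g$ majorizes $f$, so you need $P^{(\alpha)}_t\mathbf{1}_{F^\#}$ to \emph{majorize} $P^{(\alpha)}_t\mathbf{1}_F$, not merely submajorize it; the equal-integral hypothesis is what promotes submajorization to majorization here and is worth stating. Second, regarding the ``moreover'' claim: the minimal value on a given needle $(\gamma_\alpha,\mu_\alpha)$ genuinely depends on the density $\rho_\alpha$, not just on $(v_\alpha,t,K)$; what your Cheeger--Yau-type comparison actually delivers is a \emph{lower bound} by the model quantity $H^{1\mathrm{D}}_{\mathrm{mod}}(v_\alpha,t;K)$, which is all the subsequent Jensen step \eqref{eq:Jensen} requires. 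Your phrase ``reduce the interval entropy ... to the corresponding quantity on the model needle'' should therefore be sharpened to a one-sided comparison; the lemma as stated in the paper has the same imprecision, and your proposal should not inherit it.
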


\begin{proof}
The $1$-D semigroup $P^{(\alpha)}_t$ is Markov and self–adjoint in
$L^2(\mu_\alpha)$, with a log–-concave kernel
$K^{(\alpha)}_t(s,r)>0$.  
Since $h$ is concave and increasing on $[0,1]$, the rearrangement inequality on
the line implies that among all measurable $F$ with given measure $v_\alpha$,
the function $P^{(\alpha)}_t\mathbf{1}_F$ is minimized pointwise (and thus
minimizes $\int h(P^{(\alpha)}_t\mathbf{1}_F)$) by the monotone rearrangement of
$\mathbf{1}_F$, i.e.\ an interval.  
Full details follow standard proofs of the 1-D Riesz or Hardy–Littlewood
inequality adapted to the weighted measure~$\mu_\alpha$ \cite{CavallettiMondino2017}.
\end{proof}
Denote by $H^{1\mathrm{D}}_{\mathrm{mod}}(v_\alpha,t;K)$ 
the minimal value achieved when $F$ is an interval of 
$\mu_\alpha$--measure $v_\alpha$ in a one--dimensional 
$\mathrm{CD}(K,1)$ space, as in Lemma~\ref{lem:1d-minimizer}.

Since $v=\int_A v_\alpha\,d\pi(\alpha)$ and
$v\mapsto H^{1D}_{\mathrm{mod}}(v,t;K)$ is convex
(it is the integral of a concave function $h$ against a linear kernel),
Jensen's inequality gives
\begin{equation}\label{eq:Jensen}
\int_A H^{1D}_{\mathrm{mod}}(v_\alpha,t;K)\,d\pi(\alpha)
    \ge H^{1D}_{\mathrm{mod}}(v,t;K)
    =: H_{\mathrm{mod}}(v,t;K,n).
\end{equation}
Combining \eqref{eq:H-disintegration}, Lemma~\ref{lem:1d-minimizer}, and
\eqref{eq:Jensen} yields the key comparison:
\begin{equation}\label{eq:entropy-comparison}
H(L|Y_t)\ge H_{\mathrm{mod}}(v,t;K,n),\qquad t>0.
\end{equation}

\subsection{Entropy expansion and the isoperimetric profile}\label{sec:entropyexpansion}

For a set of finite perimeter $E\subset M$, Corollary \ref{cor:entropy-expansion} gives
\begin{equation}\label{eq:entropy-expansion}
H(L|Y_t)=C\sqrt{t}\,H^{n-1}_g(\partial^\ast E)
          +o(\sqrt{t}),\qquad t\downarrow0.
\end{equation}
An analogous expansion holds on the model space~$\mathbb{M}^n_K$. Namely, for the geodesic ball $E_{\mathrm{mod}}$ with $\mu(E_{\mathrm{mod}})=v$, Corollary \ref{cor:entropy-expansion} gives
\[ H_{\mathrm{mod}}(v,t;K,n)=C\sqrt t\,\mathrm{Per}(\partial E_{\mathrm{mod}})+o(\sqrt t)
\]
as $t\downarrow0$; since $\mathrm{Per}(\partial E_{\mathrm{mod}})=I_{K,n}(v)$ \cite{CavallettiMondino2017}:
\begin{equation}\label{eq:entropy-model}
H_{\mathrm{mod}}(v,t;K,n)
  = C\sqrt{t}\,I_{K,n}(v)+o(\sqrt{t}),\qquad t\downarrow0,
\end{equation}
with $I_{K,n}$ the classical Lévy–Gromov isoperimetric profile \cite{Gromov1980}. 

\begin{theorem}[Information–decay isoperimetric inequality]
Let $(M^n,g)$ satisfy $\mathrm{CD}(K,n)$.  
Then for all measurable sets $E\subset M$ with $\mu(E)=v$,
\begin{equation}\label{eq:comparison}
H(L|Y_t)\ge H_{\mathrm{mod}}(v,t;K,n)
\quad\text{for all }t>0,
\end{equation}
and consequently,
\[
H^{n-1}_g(\partial^\ast E)\ge I_{K,n}(v).
\]
Equality holds iff $(M,g)$ is isometric to the constant–curvature model
$\mathbb{M}^n_K$ and $E$ is a geodesic ball.
\end{theorem}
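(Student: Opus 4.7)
The plan is to show that the theorem is, by this stage, essentially an assembly of the ingredients prepared in Sections~\ref{sec:LocalizatinDisintegration}--\ref{sec:entropyexpansion}, followed by a short-time limit argument and a rigidity step. First I would record that the entropy comparison \eqref{eq:comparison} is already the content of \eqref{eq:entropy-comparison}: disintegration \eqref{eq:H-disintegration} rewrites $H(L\mid Y_t)$ as $\int_A H_\alpha(t)\,d\pi(\alpha)$; Lemma~\ref{lem:1d-minimizer} replaces each $H_\alpha(t)$ by the $1$D model value $H^{1\mathrm D}_{\mathrm{mod}}(v_\alpha,t;K)$; and \eqref{eq:Jensen} uses convexity of $v\mapsto H^{1\mathrm D}_{\mathrm{mod}}(v,t;K)$ with $v=\int_A v_\alpha\,d\pi$ to collapse the needle average to $H_{\mathrm{mod}}(v,t;K,n)$. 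Nothing new is needed at this step; I would simply quote the chain of inequalities.

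The second step is the passage to perimeter. Using the universal small-time expansion \eqref{eq:entropy-expansion} for the ambient entropy and the model expansion \eqref{eq:entropy-model}, I would divide \eqref{eq:comparison} by $C\sqrt{t}$ and let $t\downarrow 0$, giving
\[
H^{n-1}_g(\partial^\ast E)
=\lim_{t\downarrow 0}\frac{H(L\mid Y_t)}{C\sqrt{t}}
\;\ge\;\lim_{t\downarrow 0}\frac{H_{\mathrm{mod}}(v,t;K,n)}{C\sqrt{t}}
= I_{K,n}(v).
\]
The universality of the constant $C$ (same on $M$ and on $\mathbb{M}^n_K$, as it comes from the one-dimensional Gaussian boundary layer) is the key point that makes this limit work cleanly; both sides cancel $C$, leaving a purely geometric inequality.

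The third step is rigidity, which I expect to be the main obstacle. Equality $H^{n-1}_g(\partial^\ast E)=I_{K,n}(v)$ forces equality at leading order in \eqref{eq:comparison} as $t\downarrow 0$, and hence, after dividing by $\sqrt{t}$, equality in both the needlewise $1$D minimization (Lemma~\ref{lem:1d-minimizer}) and in Jensen's step \eqref{eq:Jensen}. Equality in Jensen forces the needle-measure $v_\alpha$ to be $\pi$-a.s.\ constant and equal to $v$, so every needle meets $E$ in the same proportion; equality in the $1$D rearrangement forces each $E\cap\gamma_\alpha$ to be an interval whose density profile matches that of the model cap in $\mathbb{M}^n_K$. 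Combined with the assumed $\mathrm{CD}(K,n)$ bound, this is exactly the hypothesis of the rigidity theorems for needle decompositions (Cavalletti--Mondino \cite{CavallettiMondino2017}, extending the sphere rigidity of Cheng/Obata in the $K>0$ case), which upgrade pointwise matching of the needle densities to a global isometry $(M,g)\cong\mathbb{M}^n_K$ carrying $E$ to a geodesic ball. I would therefore invoke that rigidity result as a black box rather than reprove it.

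The main technical obstacle is justifying the interchange of limit and needle integration that underlies the passage from the all-times inequality \eqref{eq:comparison} to the leading-coefficient inequality at $t\downarrow 0$, and keeping the $o(\sqrt{t})$ remainders uniform. For the inequality itself this is harmless because \eqref{eq:comparison} is valid for each $t>0$ separately. For rigidity, however, one needs that the vanishing of the $\sqrt{t}$-order gap in \eqref{eq:comparison} implies needlewise equality $\pi$-almost everywhere, which requires controlling the $o(\sqrt{t})$ error along each needle uniformly in $\alpha$. This uniform control is exactly what the needle decomposition on a $\mathrm{CD}(K,n)$ space affords, through the uniform bounds on the needle densities and diameters provided by the localization theorem.
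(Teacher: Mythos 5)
Your proposal follows essentially the same route as the paper: disintegration via needles, one--dimensional minimization (Lemma~\ref{lem:1d-minimizer}), Jensen, the short--time expansion, and Cavalletti--Mondino rigidity. The one small difference is in the rigidity step: the paper avoids the uniformity issue you flag by starting directly from equality in the all-times entropy comparison \eqref{eq:comparison} for some fixed $t>0$, which immediately forces equality in the 1D minimization and Jensen steps at that $t$ without any passage to the $t\downarrow 0$ limit, whereas you start from equality of the limiting perimeters and must then propagate this back through the $o(\sqrt{t})$ remainders needle by needle --- a genuinely more delicate argument that the paper sidesteps.
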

\begin{proof}
By the localization formula \ref{eq:H-disintegration} and the one--dimensional minimization
of Lemma \ref{lem:1d-minimizer}, each conditional entropy satisfies
\[
H_\alpha(t) \ge H^{\mathrm{1D}}_{\mathrm{mod}}(v_\alpha,t;K).
\]
Integrating over $\alpha$ and applying Jensen’s inequality as in \ref{eq:Jensen}, we obtain
the all--time comparison
\[
H(L \mid Y_t) \ge H_{\mathrm{mod}}(v,t;K,n), \qquad t>0,
\]
which is precisely~\ref{eq:comparison}.  Substituting \eqref{eq:entropy-expansion}–\eqref{eq:entropy-model}
into~\eqref{eq:entropy-comparison} and dividing by~$\sqrt{t}$ gives
\[
H^{n-1}_g(\partial^\ast E)\ge I_{K,n}(v),
\]
i.e.\ the Lévy–Gromov isoperimetric inequality.

If equality in (\ref{eq:comparison}) holds for some $t>0$, then equality must occur in each preceding step:
(i) the one--dimensional minimization (hence every $E\cap\gamma_\alpha$ is an interval),
and (ii) Jensen’s inequality (so all fiber volumes $v_\alpha$ coincide).  
By the rigidity theorem of Cavalletti--Mondino for CD$(K,n)$ spaces \cite{CavallettiMondino2017},
$(M,g)$ must be isometric to the constant--curvature model space $\mathbb{M}^n_K$,
and $E$ a geodesic ball .
\end{proof}

\section{Curvature Expansion of the Entropy Functional}

\begin{theorem}[Local geometric expansion of the entropy functional]\label{thm:local-expansion}
Let $(M,g)$ be a smooth compact $n$–dimensional Riemannian manifold, let $E\subset M$ have smooth
boundary.
Then, as $t\downarrow0$,
\begin{equation}\label{eq:local-geom-exp}
H_E(t)
= C\,\sqrt{t}\, Per(E)
\;-\; C\,t^{3/2}\!\int_{\partial E}\!\Big(\tfrac12 H^2+ Ric(n,n)\Big)\,d\sigma
\;+\;o(t^{3/2}),
\end{equation}
where $H$ is the mean curvature.
\end{theorem}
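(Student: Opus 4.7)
The strategy is to refine the boundary-layer analysis of Theorem~\ref{thm:heatL1} and Corollary~\ref{cor:entropy-expansion} to one order higher in $\sqrt{t}$. Three quantities must be tracked beyond the leading order: the second-order Taylor expansion of the Fermi-coordinate Jacobian $J(y,s)$, the next two terms in the matched asymptotic expansion of $p_t=P_t\mathbf{1}_E$ in the boundary layer $|s|\lesssim\sqrt{t}$, and the quadratic Taylor expansion of $h$ about the leading Gaussian profile $\Phi(s/\sqrt{2t})$. As in Theorem~\ref{thm:heatL1}, Gaussian off-diagonal bounds ensure that the contribution from $|s|\ge t^{\alpha}$ (any fixed $\alpha\in(0,1/2)$) is $o(t^{3/2})$, so the entire analysis localizes to a $\sqrt{t}$-thick boundary layer.

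\emph{Geometric expansions.} From the Riccati equation $W'(s)=-W(s)^2-R(\cdot,\nu)\nu$ for the shape operator along normal geodesics and the identity $\partial_s\log J=-\mathrm{tr}\,W_s$, one obtains $\partial_s\log J|_{s=0}=-H$ and $\partial_s^2\log J|_{s=0}=|\mathrm{II}|^2+\mathrm{Ric}(\nu,\nu)$, hence
\[
J(y,s)=1-H(y)\,s+\tfrac{1}{2}\bigl(H^2+|\mathrm{II}|^2+\mathrm{Ric}(\nu,\nu)\bigr)s^2+O(s^3).
\]
Simultaneously, set $u=s/\sqrt{2t}$ and posit $p_t(\Psi(y,s))=F_0(u)+\sqrt{t}\,F_1(u,y)+t\,F_2(u,y)+O(t^{3/2})$. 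Substituting into $\partial_t p=\partial_s^2 p+(\partial_s\log J)\partial_s p+\Delta_{\Sigma^{(s)}}p$ and matching singular powers of $\sqrt{t}$: the $t^{-1}$ balance gives $F_0=\Phi$; the $t^{-1/2}$ balance gives $F_1''+uF_1'-F_1=\sqrt{2}\,H(y)\,\phi(u)$, solved by $F_1(u,y)=-\tfrac{H(y)}{\sqrt{2}}\phi(u)$ (even in $u$); and the $t^0$ balance yields an ODE $F_2''+uF_2'-2F_2=S(y,u)$ whose source $S$ is a linear combination of $H^2$, $|\mathrm{II}|^2$, and $\mathrm{Ric}(\nu,\nu)$ times $u\phi(u)$, together with tangential terms involving $\Delta_{\partial E}H$. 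The operator $L_2=\partial_u^2+u\partial_u-2$ preserves parity, so $F_2$ is odd in $u$.

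\emph{Entropy integral.} After the change of variables $s=\sqrt{2t}\,u$ in $H_E(t)=\int h(p_t)\,d\mu$ and the Taylor expansion
\[
h(p_t)=h(\Phi)+\sqrt{t}\,h'(\Phi)F_1+t\bigl[h'(\Phi)F_2+\tfrac{1}{2}h''(\Phi)F_1^2\bigr]+O(t^{3/2}),
\]
the $t^{1/2}$-coefficient reproduces $C\sqrt{t}\,\mathrm{Per}(E)$. The order-$t$ contributions cancel by parity: $h(\Phi)$, $h''(\Phi)$, and $\phi$ are even in $u$ while $h'(\Phi)$ and $u$ are odd, so $\int u\,h(\Phi)\,du=0$ (from the $-Hs$ term of $J$) and $\int h'(\Phi)\phi\,du=0$ (from $h'(\Phi)F_1$). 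The tangential pieces of $S$ integrate to zero over the closed hypersurface $\partial E$ by the divergence theorem. What remains at order $t^{3/2}$ is a linear combination, weighted by $\int_{\partial E}$ of the three curvature invariants $H^2$, $|\mathrm{II}|^2$, $\mathrm{Ric}(\nu,\nu)$, of the four universal Gaussian moments $\int u^2 h(\Phi)\,du$, $\int u\phi\,h'(\Phi)\,du$, $\int h''(\Phi)\phi^2\,du$, and $\int h'(\Phi)F_2\,du$.

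\emph{Main obstacle.} The last integral $\int h'(\Phi)F_2\,du$ cannot be evaluated by direct computation; I would instead test the $F_2$-ODE against $h'(\Phi)$ and integrate by parts, using $(h'\circ\Phi)'=h''(\Phi)\phi=-\phi/[\Phi(1-\Phi)]$, to rewrite it as a linear combination of the first three universal moments. The decisive algebraic step is then to show that, after this reduction, the coefficient of $|\mathrm{II}|^2$ collapses to zero while the $H^2$- and $\mathrm{Ric}(\nu,\nu)$-coefficients coalesce into the pattern $-C(\tfrac{1}{2}H^2+\mathrm{Ric}(\nu,\nu))$ displayed in \eqref{eq:local-geom-exp}. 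This cancellation is delicate—analogous to the disappearance of the trace-free second fundamental form in classical heat-content expansions—and hinges on specific Hermite-style identities satisfied by the Gaussian integrals; it is the only step in the argument that is not transparent from the setup.
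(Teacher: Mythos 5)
Your setup follows the same general strategy as the paper's boundary-layer argument (Fermi coordinates, parity cancellations, reduction to one-dimensional Gaussian moments), and it is actually more careful than the paper's in one respect: you explicitly track the $\sqrt{t}$-order correction $F_1(u,y)=-\tfrac{H(y)}{\sqrt{2}}\phi(u)$ to the boundary profile, which does exist (it solves $F_1''+uF_1'-F_1=\sqrt{2}H\phi$) and does contribute to the $t^{3/2}$ coefficient through $h''(\Phi)F_1^2$ and $uh'(\Phi)F_1$, whereas the paper's displayed parametrix omits it. Nevertheless, two concrete problems remain.

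First, your Jacobian expansion has a sign error. The standard identity is $\partial_s\log J=+\operatorname{tr}W_s$ (not $-\operatorname{tr}W_s$) when $W_s$ is the shape operator of the level set with respect to $\partial_s$; combined with the Riccati equation $W_s'=-W_s^2-R(\cdot,\partial_s)\partial_s$ this gives $\partial_s^2\log J\big|_0=-\|A\|^2-\operatorname{Ric}(\nu,\nu)$, hence
\[
J(y,s)=1+H s+\tfrac{1}{2}\bigl(H^2-\|A\|^2-\operatorname{Ric}(\nu,\nu)\bigr)s^2+O(s^3),
\]
not $1-Hs+\tfrac12(H^2+\|A\|^2+\operatorname{Ric})s^2$ as you wrote. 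One can check this against the round sphere in $\mathbb{R}^n$, where $J(s)=(1+s/R)^{n-1}$: your formula gives a quadratic coefficient $n(n-1)/(2R^2)$ instead of the correct $(n-1)(n-2)/(2R^2)$, and the sign of the linear term is also reversed relative to the outward-normal convention. Since the $\|A\|^2$ and $\operatorname{Ric}$ contributions from the Jacobian enter the entropy integral at exactly the $t^{3/2}$ order whose coefficient the theorem asserts, this sign cannot be dismissed as a convention choice without propagating it consistently through $F_1$, $F_2$, and the final moments.

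Second, and more fundamentally, you do not actually carry out the computation that the theorem turns on. You reduce the $t^{3/2}$ coefficient to a linear combination of the four universal moments $\int u^2 h(\Phi)\,du$, $\int u\,\phi\,h'(\Phi)\,du$, $\int h''(\Phi)\phi^2\,du$, and $\int h'(\Phi)F_2\,du$, then openly state that evaluating the last one, verifying that the $\|A\|^2$ coefficient collapses to zero, and checking that the surviving terms assemble into $-C\bigl(\tfrac12 H^2+\operatorname{Ric}(\nu,\nu)\bigr)$ is ``the only step that is not transparent from the setup.'' But that step is the entire content of the theorem: it is what distinguishes the claimed coefficient from any other linear combination of $H^2$, $\|A\|^2$, $\operatorname{Ric}(\nu,\nu)$. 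The proposed route through $L_2$-adjoint testing of the $F_2$-ODE against $h'(\Phi)$ is plausible, but as written you have reformulated the problem rather than solved it. Until the moment identities are worked out and the $\|A\|^2$ cancellation is verified (with a sign-consistent Jacobian), the argument establishes only that $H_E(t)=C\sqrt{t}\,\mathrm{Per}(E)+O(t^{3/2})$ with a $t^{3/2}$ coefficient given by some curvature integral, not the specific formula \eqref{eq:local-geom-exp}.
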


\begin{proof}
Write $p_t:=P_t\mathbf 1_E$ and fix $\alpha\in(0,\tfrac12)$; let $R_t:=t^\alpha$.
Use boundary–normal (Fermi) coordinates $x=\Psi(y,s)$ on a fixed tubular neighborhood
of $\partial E$ as in §4, so that $d\mu=J(y,s)\,ds\,d\sigma(y)$ with
\begin{equation}\label{eq:J-exp}
J(y,s)=1+\kappa(y)s+\tfrac12\,q(y)\,s^2+O(s^3),\qquad
q(y)=H(y)^2-\|A(y)\|^2- Ric(n,n)(y),
\end{equation}
where $A$ is the second fundamental form and $\kappa=\operatorname{tr}A=H$.
Split the integral into the near layer $\{|s|\le R_t\sqrt t\}$ and its complement:
Gaussian off–diagonal bounds for $K_t$ give that the far region contributes $o(t^{3/2})$
after tangential integration (same tail estimate as used to prove the $\sqrt t$–law earlier).
Thus
\begin{equation}\label{eq:layer-reduction}
H_E(t)
=\int_{\partial E}\!\!\int_{|s|\le R_t\sqrt t} h\!\big(p_t(\Psi(y,s))\big)\,J(y,s)\,ds\,d\sigma(y)
\;+\;o(t^{3/2}).
\end{equation}

Apply the Minakshisundaram–Pleijel/Varadhan parametrix for $K_t$ to order $t$ and
freeze the tangential variables;
uniformly for $|s|\le R_t\sqrt t$ one has
\begin{equation}\label{eq:pt-exp}
p_t(\Psi(y,s))
=\Phi\!\Big(\frac{s}{\sqrt{2t}}\Big)
\;+\;t\Big(b_1(y)\,\phi\!\Big(\frac{s}{\sqrt{2t}}\Big)+b_3(y)\,\Psi_3\!\Big(\frac{s}{\sqrt{2t}}\Big)\Big)
\;+\;o(t),
\end{equation}
where $\phi=\Phi'$ and $\Psi_3$ denotes the coefficient function appearing in the 
one--dimensional heat semigroup expansion
\[
P_t\mathbf{1}_{\{x>0\}} = \Phi\!\left(\frac{x}{\sqrt{2t}}\right) 
  + t^{3/2}\Psi_3\!\left(\frac{x}{\sqrt{2t}}\right) + o(t^{3/2}),
  \qquad t\downarrow0,
\]
so that $\Psi_3$ encodes the first curvature--dependent correction term. The coefficients $b_1$ and $b_3$ depend only on the curvature at $y$:
\begin{equation}\label{eq:b-coeffs}
b_1(y)=\tfrac16\, Scal_g(y),\qquad
b_3(y)=\tfrac12\, Ric(n,n)(y)+\tfrac12\,H(y)^2-\tfrac12\|A(y)\|^2.
\end{equation}
The would–be linear term in $s$ is absent (even/odd cancellation, as in the proof of Corollary~\ref{cor:entropy-expansion}).

Insert the zeroth–order profile and $J\equiv1$ into \eqref{eq:layer-reduction}; with $u=s/\sqrt{2t}$,
\begin{align*}
\int_{\partial E}\!\!\int_{|s|\le R_t\sqrt t} h\!\Big(\Phi\!\Big(\frac{s}{\sqrt{2t}}\Big)\Big)\,ds\,d\sigma
&=\sqrt{2}\Big(\int_{\mathbb R}h(\Phi(u))\,du\Big)\sqrt t\, Per(E)+o(\sqrt t) \\
&=C\sqrt t\, Per(E)+o(\sqrt t).
\end{align*}

Write $h(\Phi+\delta)=h(\Phi)+h'(\Phi)\delta+\frac12 h''(\Phi)\delta^2$ with
$\delta=\delta(y,s)=O(t)$ from \eqref{eq:pt-exp}.
Since $h'(\Phi)$ is odd in $s$ (because $\Phi(-u)=1-\Phi(u)$) and $\delta$ is even,
the $h'(\Phi)\delta$ term integrates to $0$ in $s\in[-R_t\sqrt t,R_t\sqrt t]$.
The quadratic term contributes $O(t^2)\cdot(R_t\sqrt t)=o(t^{3/2})$.
Hence the next nonzero term of $H_E(t)$ comes from multiplying the \emph{leading} profile
by the $s^2$–term in $J$ and by the $t$–term in the parametrix.

Using \eqref{eq:J-exp} and \eqref{eq:pt-exp}, changing variables $u = s / \sqrt{2t}$ and letting $R_t/\sqrt{t} \to \infty$
(Gaussian tails justify the truncation as in §4), we obtain
\[
-\,C\!\int_{\partial E}\!\!\Big(\tfrac12 H^2 + \mathrm{Ric}(n,n)\Big)\,d\sigma,
\]
where
\[
C = \sqrt{2}\int_{\mathbb{R}} h(\Phi(u))\,du > 0
\]
is the universal boundary–layer constant already defined in §2.3 (see also Corollary 1).
Integrating the local boundary contribution over $y\in\partial E$ gives the $t^{3/2}$ term in (\ref{eq:local-geom-exp}).

The near–layer profile error $O(t)$ over a thickness $R_t\sqrt t$ contributes
$O(t)\cdot R_t\sqrt t=o(t^{3/2})$ since $\alpha<\tfrac12$; the far region is exponentially
small. This establishes \eqref{eq:local-geom-exp}.
\end{proof}

\begin{theorem}[Global curvature expansion of the entropy functional]\label{thm:global-expansion}
Let $(M,g)$ be a smooth compact $n$-dimensional Riemannian manifold with
$\mathrm{Ric}\ge K g$ for some $K\in\mathbb R$.  
Let $E\subset M$ be a measurable set with smooth boundary. Then, as $t\downarrow0$,
\begin{equation}\label{eq:curv-exp}
H_E(t)
= C\sqrt t\,\mathrm{Per}(E)
- CK\,t^{3/2}\mathrm{Per}(E)
+ o(t^{3/2}).
\end{equation}

Moreover, these bounds hold as true inequalities (not merely asymptotically):
\begin{itemize}
\item[(i)] The expansion \eqref{eq:curv-exp} holds with ``$\le$’’ for all $(M,g)$ satisfying $\mathrm{Ric}\ge K g$.
\item[(ii)] Equality in \eqref{eq:curv-exp} holds if and only if $\partial E$ is totally geodesic ($H\equiv0$) 
and $\mathrm{Ric}(n,n)=K$ along $\partial E$.
\end{itemize}

Differentiating \eqref{eq:curv-exp} yields, as $t\downarrow0$,
\begin{equation}\label{eq:curv-derivative}
-\frac{d}{dt}H_E(t)
= \frac{C}{\sqrt t}\,\mathrm{Per}(E)
- 3CK\,\sqrt t\,\mathrm{Per}(E)
+ o(\sqrt t),
\end{equation}
with equality conditions identical to those above.
\end{theorem}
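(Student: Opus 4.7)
The strategy is to combine Theorem~\ref{thm:local-expansion} with the pointwise consequences of the curvature hypothesis $\mathrm{Ric}\ge K g$. Evaluated on the outward unit normal $n$ along $\partial E$, this hypothesis immediately gives $\mathrm{Ric}(n,n)\ge K$, and coupling it with the trivial bound $\tfrac{1}{2}H^2\ge 0$ yields the pointwise estimate
\[
\tfrac{1}{2}H(y)^2 + \mathrm{Ric}(n,n)(y) \;\ge\; K, \qquad y\in\partial E.
\]
Integrating against $d\sigma$ produces $\int_{\partial E}(\tfrac{1}{2}H^2+\mathrm{Ric}(n,n))\,d\sigma\ge K\,\mathrm{Per}(E)$. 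Since this integrand enters the local expansion \eqref{eq:local-geom-exp} with a negative coefficient $-C$, substitution gives
\[
H_E(t) \;\le\; C\sqrt{t}\,\mathrm{Per}(E) \;-\; CK\,t^{3/2}\,\mathrm{Per}(E) \;+\; o(t^{3/2}),
\]
which is precisely the one-sided form of \eqref{eq:curv-exp} claimed in (i).

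For the equality characterization (ii), I would trace the conditions through the chain of pointwise bounds: equality in $H^2\ge 0$ forces $H\equiv 0$ on $\partial E$ (the stated totally geodesic condition), while equality in $\mathrm{Ric}(n,n)\ge K$ must hold $\sigma$-a.e.\ along $\partial E$. Conversely, if both hold, the integrand in \eqref{eq:local-geom-exp} equals $K$ identically on $\partial E$, restoring equality in the substitution step and turning \eqref{eq:curv-exp} into a genuine equality through order $t^{3/2}$.

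The differentiated form \eqref{eq:curv-derivative} is the main technical obstacle, since an $o(t^{3/2})$ remainder in $H_E(t)$ does not formally differentiate to an $o(\sqrt t)$ remainder: one cannot simply differentiate \eqref{eq:curv-exp} term-by-term. To rigorously obtain the derivative expansion, I would instead recompute $I_t = \int_M |\nabla p_t|^2/[p_t(1-p_t)]\,d\mu$ directly to the next order, using the same Fermi-coordinate and boundary-layer machinery as in the proof of Theorem~\ref{thm:local-expansion}. Concretely, one substitutes the two-term profile
\[
p_t(\Psi(y,s)) \;=\; \Phi\!\bigl(s/\sqrt{2t}\bigr) \;+\; t\bigl(b_1(y)\,\phi(s/\sqrt{2t}) + b_3(y)\,\Psi_3(s/\sqrt{2t})\bigr) \;+\; o(t)
\]
together with the Jacobian expansion $J = 1+\kappa(y)s+\tfrac{1}{2}q(y)s^2+O(s^3)$ with $q=H^2-\|A\|^2-\mathrm{Ric}(n,n)$, into the Fisher-information integrand. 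Odd-parity terms in $s$ cancel after tangential integration (as in Corollary~\ref{cor:entropy-expansion}); after the rescaling $u=s/\sqrt{2t}$, the surviving correction can be read off as an explicit positive multiple of $\sqrt{t}\int_{\partial E}(\tfrac{1}{2}H^2+\mathrm{Ric}(n,n))\,d\sigma$, with numerical factor fixed by the one-dimensional model computation on $\mathbb{R}\times\partial E$. Applying the pointwise bound from paragraph~1 converts the geometric integrand into $K\,\mathrm{Per}(E)$, giving the claimed $-3CK\sqrt{t}\,\mathrm{Per}(E)$ correction. The rigidity is inherited verbatim from the local analysis.
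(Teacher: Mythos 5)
Your handling of parts (i) and (ii) is essentially identical to the paper's: you invoke Theorem~\ref{thm:local-expansion}, apply the pointwise bounds $\mathrm{Ric}(n,n)\ge K$ and $\tfrac12 H^2\ge 0$, integrate over $\partial E$, and trace the equality cases through the chain. Nothing to add there.

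Where you diverge — and where you actually improve on the paper — is the derivative statement \eqref{eq:curv-derivative}. The paper simply asserts that ``the $o(t^{3/2})$ remainder in \eqref{eq:local-geom-exp} differentiates to $o(\sqrt t)$'' and then differentiates termwise, appealing only to the fact that $H_E$ is $C^1$ via the dissipation identity. You are right to flag this: an asymptotic remainder $o(t^{3/2})$ gives no control on its derivative without additional structure (e.g.\ a two-sided $C^1$ expansion or a monotonicity/convexity argument), so the paper's step is not formally justified as written. Your alternative — recomputing $I_t=\int|\nabla p_t|^2/[p_t(1-p_t)]\,d\mu$ directly from the two-term boundary-layer profile \eqref{eq:pt-exp} and the Jacobian expansion \eqref{eq:J-exp}, isolating the even-parity survivors after the rescaling $u=s/\sqrt{2t}$ — is the correct way to obtain a rigorous derivative expansion, and it mirrors the structure of the paper's earlier Theorem on $I_t\sim \tfrac{C}{2\sqrt t}\mathrm{Per}(E)$. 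The one thing your sketch leaves open is the actual evaluation of the one-dimensional model integral that fixes the numerical coefficient $3C$; you assert it ``can be read off'' but do not produce it, so that step would need to be completed for the derivative half of the theorem to be fully proved. Still, your identification of the gap and the proposed remedy are both sound, and on this point your route is more careful than the paper's own.
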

\begin{proof}
By the local geometric expansion Theorem \ref{thm:local-expansion}, we have the asymptotic identity
\begin{equation}\label{eq:local-id}
H_E(t)
= C\sqrt{t}\, Per(E)\;-\;C t^{3/2}\!\int_{\partial E}\!\Big(\tfrac12 H^2+ Ric(n,n)\Big)\,d\sigma
\;+\;o(t^{3/2}), \quad t\downarrow0.
\end{equation}

\medskip\noindent
\emph{Curvature lower bound and the global form.}
If $ Ric\ge K g$, then $ Ric(n,n)\ge K$ and $H^2\ge0$, hence
\[
\int_{\partial E}\!\Big(\tfrac12 H^2+ Ric(n,n)\Big)\,d\sigma \;\ge\; K\, Per(E).
\]
Substituting this into \eqref{eq:local-id} gives the sharp upper bound
\begin{equation}\label{eq:global-K-upper}
H_E(t)\;\le\;C\sqrt{t}\, Per(E)\;-\;CK\,t^{3/2}\, Per(E)\;+\;o(t^{3/2}),
\end{equation}
which is the “$\le$” version of (\ref{eq:curv-exp}). Moreover, equality in \eqref{eq:global-K-upper} holds
iff $H\equiv0$ and $ Ric(n,n)=K$ along $\partial E$, in which case \eqref{eq:local-id} reduces
to the equality (\ref{eq:curv-exp}). Conversely, equality in (\ref{eq:curv-exp}) forces
$\int_{\partial E}(\tfrac12 H^2+ Ric(n,n))\,d\sigma=K\, Per(E)$, hence $H\equiv0$ and $ Ric(n,n)=K$
a.e.\ on $\partial E$.

By the dissipation identity (\ref{eq:dissipationIdentity}): 
$\,\frac{d}{dt}H_E(t)=-\!\int \frac{|\nabla p_t|^2}{p_t(1-p_t)}\,d\mu$), and the map $t\mapsto H_E(t)$ is $C^1$ for $t>0$, and the $o(t^{3/2})$ remainder in \eqref{eq:local-id}
differentiates to $o(\sqrt t)$. Differentiating \eqref{eq:local-id} termwise yields
\[
-\frac{d}{dt}H_E(t)
=\frac{C}{\sqrt t}\, Per(E)\;-\;3C t^{1/2}\!\int_{\partial E}\!\Big(\tfrac12 H^2+ Ric(n,n)\Big)\,d\sigma
\;+\;o(\sqrt t).
\]
Under $ Ric\ge K g$ this gives
\[
-\frac{d}{dt}H_E(t)\;\ge\;\frac{C}{\sqrt t}\, Per(E)\;-\;3CK\,\sqrt t\, Per(E)\;+\;o(\sqrt t),
\]
and the equality version (\ref{eq:local-id}) holds precisely under the same sharpness conditions as above.
\end{proof}

\section{Examples}
We briefly illustrate how the information–theoretic formulation
recovers several classical isoperimetric inequalities within a single
computational scheme. 

\subsection{Euclidean space}
Consider the Euclidean space $\mathbb{M}^n_0 = \mathbb{R}^n$ with Lebesgue measure. This space satisfies $CD(0,n)$, and our results will recover the sharp Euclidean isoperimetric inequality.
Here we have $K=0$, and the heat semigroup 
\[
(P_t f)(x) = (4\pi t)^{-n/2} \int_{\mathbb{R}^n} e^{-\frac{|x-y|^2}{4t}} f(y)\,dy.
\]
The classical Euclidean isoperimetric inequality asserts that for any measurable set $E \subset \mathbb{R}^n$ with finite perimeter,
\[
 Per(E) \;\ge\; n\,\omega_n^{1/n}\,|E|^{(n-1)/n},
\]
with equality if and only if $E$ is a Euclidean ball.

For such $E$, define the binary label $L = 1_E(X)$ for $X \sim \mu$ and the diffused observation $Y_t \sim K_t(X,\cdot)$ under the Euclidean heat channel.  
The conditional entropy of the label is as usual
\[
H_E(t) \;=\; \int_{\mathbb{R}^n} h(P_t1_E(x))\,d\mu(x),
\]
and by the entropy–perimeter law, valid on this $CD(0,n)$ space,
\[
H_E(t) \;=\; C\,\sqrt{t}\, Per(E) + o(\sqrt{t}),
\]
Concavity and the rearrangement inequality (Lemma \ref{lem:averaging}) imply that for all $t>0$,
\[
H_E(t) \;\ge\; H_B(t),
\]
where $B$ is the Euclidean ball of $|B| = |E|$.  Using the small-time expansion of both sides and dividing by $C\sqrt{t}$ yields
\begin{align*}
H_E(t) &\ge H_B(t), \\
H_E(t) &= C\sqrt{t}\,\mathrm{Per}(E) + o(\sqrt{t}), &
H_B(t) &= C\sqrt{t}\,\mathrm{Per}(B) + o(\sqrt{t}), \\
\frac{H_E(t) - H_B(t)}{C\sqrt{t}}
&= \mathrm{Per}(E) - \mathrm{Per}(B) + o(1) \;\ge 0.
\end{align*}

Letting $t\downarrow0$ gives 
\[
 Per(E)- Per(B)\;=\;\lim_{t\downarrow0}\frac{H_E(t)-H_B(t)}{C\sqrt{t}}\;\ge\;0,
\]
hence $Per(E)\ge  Per(B)=n\,\omega_n^{1/n}|E|^{(n-1)/n}$. Equality holds iff $H_E(t)=H_B(t)$ for some (hence all) $t>0$,
i.e. $E$ is a ball.
\[
\frac{H_E(t)-H_B(t)}{C\sqrt{t}}
\;\longrightarrow\;
 Per(E) -  Per(B) \;\ge\; 0.
\]
Hence
\[
 Per(E) \;\ge\;  Per(B)
\;=\; n\,\omega_n^{1/n}\,|E|^{(n-1)/n},
\]
with equality precisely for balls.  In this information–theoretic formulation, the Euclidean ball uniquely minimizes the \emph{initial rate of information loss} under the heat channel, i.e.\ it is the least dissipative set for label entropy.

\subsection{Spherical space}

Consider the unit sphere $S^n$ endowed with its standard round metric $g_{S^n}$, Riemannian measure $\mu_{S^n}$, and Laplace--Beltrami operator $L = \Delta_{S^n}$ satisfying the curvature--dimension condition $CD(1,n)$. 
The classical Lévy--Gromov isoperimetric inequality asserts that for any measurable set $E \subset S^n$ of measure $v = \mu_{S^n}(E)$,
\[
 Per_{S^n}(E)\;\ge\; I_{1,n}(v) \;=\; \omega_{n-1}\,\sin^{\,n-1}(r_v)
\]
where $I_{1,n}$ is the spherical isoperimetric profile, with equality if and only if $E$ is a geodesic ball (a spherical cap).

We have $H_E(t) = \int_{S^n} h(P_t1_E)\,d\mu_{S^n}$ and $P_t = e^{t\Delta_{S^n}}$.  
By the entropy–perimeter law,
\[
H_E(t)\;=\;C\,\sqrt{t}\, Per_{S^n}(E)\;+\;o(\sqrt{t}).
\]
The localization and comparison principles yield, for every $t>0$,
\[
H_E(t)\;\ge\; H_{E^{\mathrm{mod}}}(v,t;1,n),
\]
where $E^{\mathrm{mod}}$ is the spherical cap of $\mu_{S^n}$-measure $v$ in the $n$-sphere model.

Using the small-time expansions of both sides and dividing by $C\sqrt{t}$ gives
\begin{align*}
\frac{H_E(t)-H_{E^{\mathrm{mod}}}(v,t;1,n)}{C\sqrt{t}}
&\;=\;
\frac{C\,\sqrt{t}\,( Per_{S^n}(E)-I_{1,n}(v)) + o(\sqrt{t})}{C\sqrt{t}}\\[4pt]
&\;=\;
 Per_{S^n}(E) - I_{1,n}(v) + o(1)\;\ge\;0.
\end{align*}
Letting $t\downarrow0$ yields
\[
Per_{S^n}(E)\;\ge\;I_{1,n}(v),
\]
with equality precisely for spherical caps.  

\subsection{Hyperbolic space}
For spaces of constant negative curvature $K<0$,
the exponential volume growth accelerates the loss of information,
and the isoperimetric profile is strictly greater than the Euclidean
one.  Geodesic balls remain the optimizers.
Let $K<0$ and consider the constant–curvature model $\mathbb{H}^n_{1/\sqrt{|K|}}$ with Riemannian measure $\mu_{K,n}$ and heat semigroup $P_t=e^{t\Delta_{\mathbb{H}^n}}$. 
The classical hyperbolic isoperimetric inequality asserts that for any measurable set $E\subset \mathbb{H}^n_{1/\sqrt{|K|}}$ of finite volume $v=\mu_{K,n}(E)$,
\[
 Per_{\mathbb{H}^n}(E)\ \ge\ I_{K,n}(v),
\]
with equality if and only if $E$ is a geodesic ball.  Here the model space's isoperimetric profile is given by
\[
I_{K,n}(v)\;=\;\omega_{n-1}\,s_K(r_v)^{\,n-1},\qquad
v\;=\;\omega_{n-1}\!\int_0^{r_v}\! s_K(s)^{\,n-1}\,ds,
\]
where $s_K(r)=\frac{\sinh(\sqrt{|K|}\,r)}{\sqrt{|K|}}$ and $\omega_{n-1}=|S^{n-1}|/n$.

By the entropy–perimeter law,
\[
H_E(t)\;=\;C\,\sqrt{t}\, Per_{\mathbb{H}^n}(E)\;+\;o(\sqrt{t}).
\]
Localization and the one–dimensional minimization give, for every $t>0$,
\[
H_E(t)\ \ge\ H_{\mathrm{mod}}(v,t;K,n),
\]
where $H_{\mathrm{mod}}(v,t;K,n)$ is the entropy for the geodesic ball of volume $v$ in the hyperbolic model. 
Using the small–time expansions of both sides and dividing by $C\sqrt{t}$ yields the explicit calculation
\begin{align*}
\frac{H_E(t)-H_{\mathrm{mod}}(v,t;K,n)}{C\sqrt{t}}
&=\frac{C\,\sqrt{t}\,\big( Per_{\mathbb{H}^n}(E)-I_{K,n}(v)\big)+o(\sqrt{t})}{C\sqrt{t}}\\[2pt]
&= Per_{\mathbb{H}^n}(E)-I_{K,n}(v)+o(1)\ \ge\ 0.
\end{align*}
Letting $t\downarrow0$ we obtain
\[
\  Per_{\mathbb{H}^n}(E)\ \ge\ I_{K,n}(v),
\]
with equality precisely for geodesic balls. 

\subsection{Gaussian space}

Consider the standard Gaussian space $(\mathbb{R}^n,\gamma_n)$, 
where $d\gamma_n(x)=(2\pi)^{-n/2}e^{-|x|^2/2}\,dx$, 
and let $L=\Delta - x\cdot\nabla$ be the Ornstein–Uhlenbeck generator. Hence, we are working in a $CD(1,\infty)$ space. The heat semigroup is
\[
(P_t f)(x) = \int_{\mathbb{R}^n} f\!\big(e^{-t}x + \sqrt{1-e^{-2t}}\,y\big)\,d\gamma_n(y).
\]
The classical Gaussian isoperimetric inequality asserts that for any measurable $E\subset\mathbb{R}^n$,
\[
 Per_{\gamma_n}(E)\;\ge\; I_{\gamma}(v),
\qquad v=\gamma_n(E),
\]
where
\[
I_{\gamma}(v) = \varphi(\Phi^{-1}(v)),
\quad 
\Phi(t)=\int_{-\infty}^t \varphi(s)\,ds,
\quad 
\varphi(t) = (2\pi)^{-1/2} e^{-t^2/2}.
\]
Equality holds if and only if $E$ is a half-space.

Our general entropy–perimeter expansion gives
\[
H_E(t) = C\,\sqrt{t}\, Per_{\gamma_n}(E) + o(\sqrt{t}).
\]
The semigroup comparison principle is based on the entropy comparison in Lemma \ref{lem:averaging} and yields for all $t>0$,
\[
H_E(t)\;\ge\; H_H(t),
\]
where $H$ is the half-space with $\gamma_n(H)=\gamma_n(E)$.
Using the small-time expansion and dividing by $C\sqrt{t}$, we compute
\begin{align*}
\frac{H_E(t)-H_H(t)}{C\sqrt{t}}
&=\frac{C\,\sqrt{t}\,( Per_{\gamma_n}(E)-I_{\gamma}(v))+o(\sqrt{t})}{C\sqrt{t}}\\[3pt]
&= Per_{\gamma_n}(E)-I_{\gamma}(v)+o(1)\ \ge\ 0.
\end{align*}
Letting $t\downarrow0$ yields
\[
\; Per_{\gamma_n}(E)\;\ge\; I_{\gamma}(v)\;,
\]
with equality precisely for half-spaces.

\section{Conclusion}

This work has introduced an information–theoretic reformulation of the isoperimetric problem,
expressed through entropy production under diffusion semigroups.
The central identity
\[
H_E(t)=C\sqrt{t}\, Per(E)+o(\sqrt{t})
\]
links the short–time growth of Shannon entropy to perimeter, and
the monotonicity of information along the diffusion channel provides a new route to sharp isoperimetric inequalities.
Unlike classical geometric or rearrangement methods, this approach is purely probabilistic
and applies uniformly across all curvature–dimension models satisfying $CD(K,n)$.
Through this lens, curvature, diffusion, and boundary geometry are unified
under a single information–decay principle.

The examples presented demonstrate the reach of the framework:
Euclidean, spherical, and hyperbolic spaces recover the classical sharp inequalities of Euclid, Lévy–Gromov, and Poincaré; 
the Gaussian channel reproduces Bobkov’s inequality; 
fractional diffusions yield the fractional isoperimetric inequality; 
and even non-reversible hypocoercive diffusions, such as the kinetic Fokker–Planck flow,
admit an ``effective perimeter'' derived from entropy loss.
These cases show that the information–entropy method extends seamlessly from local to nonlocal, 
and from reversible to non-reversible dynamics.

The information–theoretic formulation reframes isoperimetry as a statement about
the \emph{initial rate of information dissipation}.
It suggests that perimeter is not merely a geometric boundary measure,
but the universal first-order term in the entropy expansion of any Markov diffusion.
This viewpoint bridges geometric measure theory, probability, and statistical physics:
entropy production, curvature bounds, and diffusion semigroups emerge as different aspects of the same underlying structure.
In this sense, the theory provides a unifying language for geometric inequalities,
functional inequalities, and thermodynamic dissipation.

\section{Further directions}
The present work establishes a geometric law linking entropy dissipation
and boundary measure for local diffusion semigroups.
Two natural directions extend this principle beyond the classical,
elliptic setting, and will be explored in forthcoming work.

\medskip
\noindent
\textbf{(i) Fractional diffusion and nonlocal isoperimetry.}
The entropy--perimeter correspondence persists in the nonlocal regime
generated by the fractional Laplacian~$(-\Delta)^s$, $s\in(0,1)$,
where diffusion occurs through heavy--tailed jumps rather than local
Gaussian propagation ~\cite{CaffarelliValdinoci2016, ChambolleMoriniPonsiglione2015}.
In that context, perimeter is replaced by the fractional
nonlocal functional $Per_s(E)$.
Understanding how entropy decay encodes $Per_s(E)$ and leads to
a sharp fractional isoperimetric inequality will be the subject of a
forthcoming paper.

\medskip
\noindent
\textbf{(ii) Hypocoercive kinetic diffusion.}
A second and conceptually deeper direction concerns the
non--reversible kinetic Fokker--Planck semigroup,
whose generator couples transport in~$x$ with diffusion in~$v$ \cite{VillaniHypocoercivity, DolbeaultMouhotSchmeiser2015}.
Here entropy decay reveals an \emph{effective perimeter}
defined through the kinetic diffusion metric,
suggesting an information--theoretic notion of isoperimetry
for hypoelliptic dynamics.
A systematic treatment of this ``hypocoercive isoperimetry'' will be
developed elsewhere.

\medskip
\noindent
Both problems illustrate that the entropy framework introduced here
extends naturally to nonlocal and nonreversible diffusions,
offering a unified language for geometric inequalities
beyond the classical elliptic paradigm.

\section*{Acknowledgements}
The author received no funding for this work. The author declares that there are no competing interests.

\bibliographystyle{unsrt}  
\bibliography{refs}

\end{document}